\numberwithin{equation}{section}
\theoremstyle{plain}
\newtheorem{thm}{Theorem}[section]
\theoremstyle{remark}
\numberwithin{equation}{section}
\newtheorem{lemma}[thm]{Lemma}
\newtheorem{remark}[thm]{Remark}
\newtheorem{col}[thm]{Corollary}
\newcommand{\bb}{{\bf B}}
\newcommand{\bs}{{\bf S}}
\newcommand{\br}{{\bf r}}
\newcommand{\bi}{{\bf I}}
\newcommand{\bx}{{\bf X}}
\newcommand{\be}{{\bf e}}
\newcommand{\bS}{{\boldsymbol\Sigma}}
\newcommand{\bd}{{\mathbb{R}}}
\newcommand{\ba}{{\bf A}}
\newcommand{\bh}{{\bf H}}
\newcommand{\re}{{\rm E}}
\newcommand{\rtr}{{\rm tr}}
\renewcommand{\(}{\left(}
\renewcommand{\)}{\right)}
\newcommand{\mb}{\mathbf}
\begin{document}

\begin{frontmatter}
\title{No eigenvalues outside the limiting support of the spectral distribution of general sample covariance matrices}
\runtitle{No eigenvalues outside the support}

\begin{aug}
\author{ \snm{Yanqing Yin}\thanksref{a}\ead[label=e1]{yinyq@jsnu.edu.cn}}

\thanks{ Yanqing, Yin was partially supported by NSFC 11701234 and the Priority Academic Program Development of Jiangsu Higher Education Institutions.
}

\address[a]{{School of Mathematics and Statistics, Jiangsu Normal University, Xuzhou, P.R.C., 221116.}
\printead{e1}}



\end{aug}

\begin{abstract}
This paper is to investigate the spectral properties of sample covariance matrices under a more general population. We consider a class of matrices of the form $\mb S_n=\frac1n\bb_n\bx_n\bx_n^*\bb_n^*$, where $\mb B_n$ is a $p\times m$ non-random matrix and $\mb X_n$ is an $m\times n$ matrix consisting of i.i.d standard complex entries. $p/n\to c\in (0,\infty)$ as $n\to \infty$ while $m$ can be arbitrary. We proved that under some mild assumptions, with probability 1, there will be no eigenvalues in any closed interval contained in an open
interval outside the supports of the limiting distribution $F_{c_n,H_n}$, for all sufficiently large $n$. An extension of Bai-Yin law is also obtained.
\end{abstract}

\begin{keyword}
\kwd{Extreme Eigenvalues; Spectrum Separation; Largest Eigenvalue; Spectral Norm; Sample Covariance Matrices
}
\end{keyword}

\end{frontmatter}
\section{Introduction}
\subsection{Background and  motivation}
The analysis of the properties of sample covariance matrix, which plays an important role in multivariate analysis as well as high-dimensional data, has been paid impressive attentions. Suppose we observe $\mb y_1,\mb y_2,\cdots,\mb y_n$, independently and identically distribute (i.i.d.) $p$-dimensional complex random variables with mean vector $\mb 0_p$ and covariance matrix $\mb \Sigma_p$ and denote $$\mb y_i=\(y_{1,i},y_{2,i},\cdots,y_{p,i}\)',\quad i=1,2,\cdots,n,$$ where `$'$' stands for the ordinary transpose of a vector. Many statistical problems such as Principal Component Analysis (PCA), the estimates of population covariance matrices and the tests that involve covariance matrices, require the investigation of the spectrum of sample covariance matrices, which is defined as $$\mb S_n=\frac{1}{n}\sum_{i=1}^n \mb y_i\mb y_i^*=\frac{1}{n}\mb Y_n\mb Y_n^*,$$ here `$*$' stands for conjugate transpose and $\mb Y_n=\(\mb y_1,\mb y_2,\cdots,\mb y_n\)$ is the observation matrix. In classical multivariate analysis, the theory of sample covariance matrices for normal variables has been well worked out, see for instance the famous book of  \cite{Anderson1983An}. However, it became apparent that multivariate data in practice were rarely Gaussian. What is more, even in Gaussian case, the exact expressions of results were too cumbersome. The asymptotic analysis when the dimension $p$ is fixed while the sample size $n$ tends to infinity was shown to be convenient and thus been applied extensively for a long time. In this classical framework, the sample covariance matrices $\mb S_n$ can be viewed as a good estimator of population covariance matrix $\mb \Sigma_p$ since the spectrum of $\mb S_n$ are consistent estimators of the spectrum of $\mb \Sigma_p$. In fact, by law of large numbers, for any $1\leq j,k\leq p$, the element lies in the $j$-th row and $k$-th column of $\mb S_n$, denoted by $s_{j,k,n}$ is a consistent estimator of the corresponding element $\sigma_{j,k}$ of $\mb \Sigma_p$. Then, according to the theory of matrix perturbation, for any $j$, the distance between the $j$-th largest eigenvalues of $\mb S_n$ and $\mb \Sigma_p$ is $o(p)$, which tends to 0 as $n\to\infty$ when $p$ is fixed.

However, statisticians are facing with datasets of increasingly larger size nowadays and
the practical relevance of classical framework that $p$ is fixed and $n$ goes to infinity is often unreasonably erroneous. One of the solutions to this challenge is to develop a framework of asymptotic theory  that both $p$ and $n$ tend to infinity. Obviously, for all $j$, the distance between the $j$-th largest eigenvalues of $\mb S_n$ and $\mb \Sigma_p$ may be constant order or even diverge in this case. But what is the exact relationship between the spectrums of $\mb S_n$-the sample covariance matrix and $\mb \Sigma_p$-the population covariance matrix? The pioneer work of  \cite{marchenko1967distribution}, continued in \cite{Wachter1978The,Silverstein1995Strong}, give out some fundamental answers to this question.
Let $\ba$ be any $n \times n$ square matrix having real eigenvalues and denote its eigenvalues  by ${\lambda_j}, j = 1,2, \cdots, n$. The empirical spectral distribution (ESD) of $\ba$ is defined by
$$F^{\ba}\left(x\right) =\frac{1}{n}\sum\limits_{j = 1}^n {I\left({\lambda _j} \le x\right)},$$
where ${I\left(D\right)}$ is the indicator function of an event ${D}$ and the Stieltjes transform of ${F^{\mathbf A}}\left(x\right)$ is given by
$${ m}_{F^{\ba}}\left(z\right)=\int_{-\infty}^{+\infty}\frac{1}{x-z}d{F^{\mathbf A}}\left(x\right),$$
where $z=u+ iv\in\mathbb{C}^+$. The famous M-P law states that if $\mb Y_n=\mb \Sigma_p^{1/2}\mb X_n$, where $\mb \Sigma_p^{1/2}$ is the $p\times p$ Hermitian square root of $\mb \Sigma_p$ and $\mb X_n=\(\mb x_1,\mb x_2,\cdots,\mb x_n\)$ is a $p\times n$ matrix whose elements are i.i.d standard complex random variables, $p/n\to c\in(0,\infty),$ $F^{\bS_p}\xrightarrow{d}H$ and the sequence $\(\mb \Sigma_p\)_p$ is bounded in spectral norm, then almost surely, the ESD $F^{\bs_n}$ of the sample covariance matrix $\mb S_n=\frac{1}{n}\mb \Sigma_p^{1/2}\mb X_n\mb X_n^*\mb \Sigma_p^{1/2}$, tends weakly to a nonrandom p.d.f. $F$ as $n\to \infty$. And for each $z\in\mathbb{C}^+$, $m(z)=m_F(z)$ is a solution to the equation
\begin{align}\label{al3}
m(z)=\int\frac1{t(1-c-czm(z))-z}dH(t),
\end{align}
which is unique in the set $\left\{m(z)\in\mathbb{C}^+: -(1-c)/z+cm(z)\in\mathbb{C}^+\right\}$. When $c\in(0,1)$ and $\mb \Sigma_p^{1/2}=\mb I_p$, the $p$ dimensional identity matrix, it can be derived from M-P law that the support of the limit of $F^{\mb S_n}$ is $[(1-\sqrt{c})^2,(1+\sqrt{c})^2]$. Note that all eigenvalues of $\mb \Sigma_p$ are equal to 1, apparently, the spectrum of $\mb S_n$ is no longer a good estimator of $\mb \Sigma_p$.
A consequent question one may ask is: what if there is no exact structure on $\mb Y_n$ ? That is to say, whether the M-P law valid for more general observations $\mb y_1,\mb y_2,\cdots,\mb y_n$ with mean vector $\mb 0_p$ and population covariance matrix $\mb \Sigma_p$.  \cite{Bai2008Large} gives an sufficient condition which ensure the valid of M-P law and an example where M-P limit is  failure was showed in \cite{Li2017On} recently.

The investigation of ESD is just the first step. By definition, finite outliers will not change the asymptotic behavior of ESD. Thus the next important problem is about the convergence of the extreme eigenvalues of $\mb S_n$. That is to say, whether the extreme eigenvalues of $\mb S_n$ tend to the edge of the limit spectral distribution (LSD). What is more, we shall take one more step further and ask whether there are eigenvalues outside the support of the LSD of $\mb S_n$. In \cite{Yin1988,Bai1988A} the so-called  Bai-Yin law was proved, which state that for $\mb S_{n,0}=\frac{1}{n}\mb X_{n,0}\mb X_{n,0}^*$, where $\mb X_{n,0}$ is a $p\times n$ matrix of the upper-left corner of a standard i.i.d double array $\{x_{j,k}\}$, the necessary and sufficient condition for almost surely (a.s.) convergence of the largest eigenvalue to $(1+\sqrt{c})^2$ is the existence of the fourth moment. The sufficient condition for almost surely convergence of the smallest eigenvalue of $\mb S_{n,0}$ to $(1-\sqrt{c})^2$ when $c\in(0,1)$ was given in  \cite{BaiYin1993} while the necessary condition was proved by  \cite{Tikhomirov2015The} recently. As far as we know, the most recent paper consider the convergence of the extreme eigenvalue of sample covariance matrices comes from \cite{Chafa2017On}. They showed the convergence in probability of the extreme eigenvalues of $\mb S_n$ when the population covariance matrix $\mb \Sigma_p=\mb I_p$ and the distribution law of $\mb y_1$ is log-concave. For other related work, we refer the reader to \cite{Pillai2014Universality,Feldheim2010A,Jonsson2008Some,P2009Universality} and references therein. In many applications, the no-eigenvalue result turns out to
be useful. As it can be, for instance, used to deal with
random quantities involving $\mb S_n$ or its inverse. The study of exact separation of spectrum starting from the classical work of  \cite{Bai1998No,Bai1999Exact} and continued in  \cite{Paul2009No}.

\subsection{The model and main results}
In this paper, we consider the following model. Suppose that the observation matrix $\mb Y_n=\(\mb y_1,\mb y_2,\cdots,\mb y_n\)=\mb B_n\mb X_n$. Then the sample covariance matrix $$\bs_n=\frac{1}{n}\sum_{i=1}^n\mb y_i\mb y_i^*=\frac{1}{n}\mb Y\mb Y^*=\frac1n\bb_n\bx_n\bx_n^*\bb_n^*.$$ We assume:
\begin{itemize}
\item[(a)] $\bx_n=(x_{jk})$ is an $m\times n$ matrix whose entries are i.i.d. complex variables with mean zero and variance $1$, and $m$ can be arbitrary (possibly infinite);
\item[(b)] $\bb_n$ is a $p\times m$ matrix such that $\bb_n\bb_n^*=\bS_p,$ whose spectral norm $\|\mb \Sigma_p\|$ is uniformly bounded;
\item[(c)] For each $p$, $H_n=F^{\bS_p}\xrightarrow{d}H$, a probability distribution function (p.d.f.);
\item[(d)] $c_n=p/n\to c\in(0,\infty)$ as $n\to\infty$;
\item[(e)] For some $0<\delta<1$, $\re|x_{11}|^{6+\delta}=\mu<\infty$.
\end{itemize}

\begin{remark}
In our model, $\bx_n=(x_{jk})$ may be dependent on $n$, i.e., the data  may not comes from a double array.
\end{remark}

\begin{remark}
Under assumption $(a)$ and $(b)$, $\mb Y_n$ can be viewed as an observation matrix that consists of $n$ samples drawn from a $p$-dimensional population with mean vector $\mb 0$ and covariance matrix $\mb \Sigma_n$. One may find that when $m=p$, our model reduce to the well studied model in the sense of spectrum due to the polar decomposition.
\end{remark}

\begin{remark}
This model covers variety population from time series. Such as the repeated linear process arises in panel surveys or longitudinal studies where
$$\mb y=\mb B\mb x=\left(
                     \begin{array}{cccccccc}
                       \cdots & b_{1-p} & b_{2-p} & \cdots & b_{0} & b_{1} & b_{2}& \cdots \\
                       \cdots & b_{2-p} & b_{3-p} & \cdots & b_{1} & b_{2} & b_{3} & \cdots \\
                       \vdots & \vdots & \vdots & \vdots & \vdots & \vdots & \vdots & \vdots\\
                       \cdots & b_{-1} & b_{0} & \cdots & b_{p-2}& b_{p-1} & b_{p} & \cdots \\
                       \cdots & b_{0} & b_{1} & \cdots & b_{-1} & b_{p} & b_{p+1} & \cdots\\
                     \end{array}
                   \right)
                   \left(
                     \begin{array}{c}
                       \vdots \\
                       x_p \\
                       \vdots \\
                       x_1 \\
                       \vdots \\
                     \end{array}
                   \right).
$$
\end{remark}

Our first result is as following
\begin{thm}\label{tmf}
Under assumption (a-e), then as $n\to \infty$, the ESD $F^{\bs_n}$ of the sample covariance matrix $\mb S_n$ tends weakly to a nonrandom p.d.f. $F$  whose Stieltjes transform $m(z)=m_F(z)$ satisfies equation (\ref{al3}).
\end{thm}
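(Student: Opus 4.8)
The plan is to carry out the Stieltjes--transform argument underlying the Marchenko--Pastur and Silverstein laws, the one genuinely new feature being the treatment of the unconstrained (possibly infinite) inner dimension $m$. Write $m_n(z)=\frac1p\tr(\bs_n-z\bi_p)^{-1}$ for $z=u+iv\in\mathbb C^+$. Since equation (\ref{al3}) was recalled to have, for each such $z$, a unique solution $m(z)$ in $\{m\in\mathbb C^+:-(1-c)/z+cm\in\mathbb C^+\}$, which is the Stieltjes transform of a probability distribution $F$, it suffices to prove that $m_n(z)\to m(z)$ almost surely for every $z\in\mathbb C^+$; the weak convergence $F^{\bs_n}\xrightarrow{d}F$ (a.s.) then follows from the continuity theorem for Stieltjes transforms, and $F$ is automatically a p.d.f.

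\emph{Truncation and concentration.} First I would truncate the entries at level $\eta_n\sqrt n$, $\eta_n\downarrow0$ slowly, then recenter and rescale to variance one; recentering changes $\bx_n$ by a rank--one matrix and rescaling is a scalar, so by the rank inequality both perturb $F^{\bs_n}$ by $o(1)$. For the truncation one cannot count altered entries because $m$ may be infinite; instead one invokes Bai's $L^4$ inequality comparing $F^{\frac1n AA^*}$ with $A=\bb_n\bx_n$ and $F^{\frac1n CC^*}$ with $C=\bb_n\hat\bx_n$, which reduces the matter to $\frac1{np}\|\bb_n(\bx_n-\hat\bx_n)\|_F^2$, whose expectation is $\frac1p\E|x_{11}\mb 1(|x_{11}|>\eta_n\sqrt n)|^2\,\tr\bS_p\le\|\bS_p\|(\eta_n\sqrt n)^{-(4+\delta)}\mu\to0$ by (e); a Borel--Cantelli bound makes this almost sure. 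Note that only $\bS_p=\bb_n\bb_n^*$, never $\bb_n^*\bb_n$ or $m$, appeared. So from now on the entries are bounded by $\eta_n\sqrt n$ and all their moments are finite. Next, with $\mb y_i=\bb_n\mb x_i$ the columns and $\E_i$ conditional expectation given $\mb y_1,\dots,\mb y_i$, the difference $m_n(z)-\E m_n(z)=\sum_{i}(\E_i-\E_{i-1})[\frac1p\tr\mb A-\frac1p\tr\mb A^{(i)}]$ ($\mb A=(\bs_n-z\bi_p)^{-1}$, $\mb A^{(i)}$ with the $i$th term deleted) is a sum of martingale differences bounded by $2/(pv)$, so Burkholder's inequality gives $\E|m_n(z)-\E m_n(z)|^4=O(n^{-2})$ and hence $m_n(z)-\E m_n(z)\to0$ a.s. It then remains to identify $\lim\E m_n(z)$.

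\emph{The self--consistent equation.} Let $\mb A_i=(\bs_n-\frac1n\mb y_i\mb y_i^*-z\bi_p)^{-1}$ and $\beta_i=\frac1n\mb y_i^*\mb A_i\mb y_i$. Sherman--Morrison yields
\[
z\,m_n(z)=\frac1{c_n}-1-\frac1{c_n}\cdot\frac1n\sum_{i=1}^n\frac1{1+\beta_i}.
\]
Since $\beta_i=\frac1n\mb x_i^*\bb_n^*\mb A_i\bb_n\mb x_i$ with $\mb x_i$ independent of $\mb A_i$ and having i.i.d. standardized entries, the quadratic--form estimate gives $\E|\beta_i-\frac1n\tr(\bb_n^*\mb A_i\bb_n)|^2\le C\,\E|x_{11}|^4\,\tr(\mb A_i\bS_p\mb A_i^*\bS_p)/n^2\le C\|\bS_p\|^2p/(n^2v^2)=O(1/n)$; crucially, $\tr(\bb_n^*\mb A_i\bb_n)=\tr(\bS_p\mb A_i)$ is a $p$--dimensional trace, differing from $\tr(\bS_p\mb A)$ by $O(1)$, so the inner dimension $m$ drops out entirely. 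Putting $g_n(z)=\frac1n\tr(\bS_p\mb A)$ one obtains $z\,m_n(z)=\frac1{c_n}-1-\frac1{c_n(1+g_n(z))}+o_{\mathrm{a.s.}}(1)$, and running the analogous leave--one--out expansion on $\frac1p\tr(\bS_p\mb A)$ gives the deterministic equivalent $\mb A(z)\approx-\frac1z(\bi_p+\underline m_n(z)\bS_p)^{-1}$ in normalized trace, with $\underline m_n(z)=-[z(1+g_n(z))]^{-1}$ solving the $(c_n,H_n)$ analogue of the companion equation of (\ref{al3}), $z=-1/\underline m_n+c_n\frac1p\tr[\bS_p(\bi_p+\underline m_n\bS_p)^{-1}]$; this closes the system. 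Since $c_n\to c$, $H_n\xrightarrow{d}H$, $\|\bS_p\|=O(1)$, and no mass of $F^{\bs_n}$ escapes to infinity ($\frac1p\tr\bs_n=O(1)$ a.s.), stability of this fixed point forces $\underline m_n(z)\to\underline m(z)$, hence $m_n(z)\to m(z)$ with $m$ solving (\ref{al3}); uniqueness in the prescribed set was recalled above, which finishes the proof.

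\emph{Main obstacle.} Everything past the truncation step is the classical machinery; the delicate point is exactly that $m$ is unconstrained (even infinite), so that neither the usual truncation bookkeeping nor the companion matrix $\frac1n\bx_n^*(\bb_n^*\bb_n)\bx_n$ (of size $m$) can be used off the shelf. The resolution is uniform: every quantity that ultimately matters is a $p$--dimensional trace against $\bS_p=\bb_n\bb_n^*$ --- in the truncation because $\E\|\bb_n(\bx_n-\hat\bx_n)\|_F^2=\E|x_{11}\mb 1(\cdot)|^2\,\tr\bS_p$, and in the self--consistent step because $\tr(\bb_n^*\mb A_i\bb_n)=\tr(\bS_p\mb A_i)$ --- and making the first of these quantitative is precisely what forces the $6+\delta$th moment in assumption (e). One could instead try to route through the general result of \cite{Bai2008Large}, but arguing directly is cleaner given the explicit $\bb_n\mb x_i$ structure.
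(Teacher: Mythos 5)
Your argument is essentially correct, but it takes a genuinely different route from the paper. The paper's proof of Theorem \ref{tmf} is a two-line reduction: it invokes Theorem 1.1 of \cite{Bai2008Large} (the general LSD theorem for $\frac1n\sum_i\mb y_i\mb y_i^*$ with i.i.d.\ columns), whose only nontrivial hypothesis in this setting is the quadratic-form moment condition $\re|\mb x_j^*\bb_n^*\ba\bb_n\mb x_j-\rtr\ba\bS_p|^2=o(n^2)$, and this is exactly the $l=2$ case of Lemma \ref{le3} (giving the much stronger bound $Cn\|\ba\|^2$). You instead rerun the whole Silverstein machinery from scratch — truncation, martingale concentration of $m_n(z)$, leave-one-out, self-consistent equation, stability of the fixed point. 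The key structural observation is the same in both: every quantity that matters is a $p$-dimensional trace against $\bS_p=\bb_n\bb_n^*$ (e.g.\ $\rtr(\bb_n^*\mb A_i\bb_n)=\rtr(\bS_p\mb A_i)$ and $\re\|\bb_n(\bx_n-\hat\bx_n)\|_F^2=n\,\re|x_{11}\mb 1(\cdot)|^2\rtr\bS_p$), so the inner dimension $m$ never enters; that is precisely what makes the Bai--Zhou condition verifiable. What each buys: the paper's route is far shorter and reuses an existing general theorem; yours is self-contained and makes the disappearance of $m$ explicit at each step. Note also that your uniform truncation at $\eta_n\sqrt n$ suffices for the LSD (and only needs a second moment there), whereas the paper's finer truncation at $n^{1/2-\eta}/b_j$ is designed for Theorem \ref{tms}, not for this theorem. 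Two small points you should tighten if writing this out in full: the replacement of $\frac1n\sum_i(1+\beta_i)^{-1}$ by $(1+g_n(z))^{-1}$ should be done for $\re m_n$ (with the martingale step absorbing the randomness) or via higher-moment bounds, since your $O(1/n)$ variance estimate alone does not give an almost sure statement; and one must record the standard lower bound $|1+\beta_i|\ge v/|z|$ before inverting.
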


Let $\underline\bs_n=\frac1n\bx_n^*\bb_n^*\bb_n\bx_n$, then we know that the nonzero eigenvalues of $\bs_n$ and $\underline\bs_n$ are the same. It is easy to verify
\begin{align*}
F^{\underline\bs_n}(x)=(1-c_n)I_{[0,\infty)}+c_nF^{\bs_n}(x)
\end{align*}
which implies that
\begin{align}\label{al2}
m_{F^{\underline\bs_n}}(z)=-(1-c_n)/z+c_nm_{F^{\bs_n}}(z),\quad z\in\mathbb{C}^+.
\end{align}
Therefore we know that the limiting spectral distribution (LSD) $\underline F$ of $F^{\underline\bs_n}$ exists and satisfies
\begin{align*}
\underline F(x)=(1-c)I_{[0,\infty)}+cF(x).
\end{align*}
Thus
\begin{align*}
m_{\underline F}(z)=-(1-c)/z+cm_F(z).
\end{align*}
Due to (\ref{al3}) and the above equality, we find
\begin{align*}
m(z)=-z^{-1}\int\frac1{t\underline m(z)+1}dH(t)
\end{align*}
and
\begin{align}\label{al6}
{\underline m(z)}=-\left({z-\int\frac t{t\underline m(z)+1}dH(t)}\right)^{-1}
\end{align}
where $\underline m(z)=m_{\underline F}(z)$. If we let $F^{c,H}$ denote $\underline F$, then $F^{c_n,H_n}$ can be viewed as the limiting nonrandom p.d.f. associated with ratio $c_n$ and $H_n$.

The most important theorem of this paper states

\begin{thm}\label{tms}
Under the above model and assuming (a-e) with one more assumptions
\begin{itemize}
\item[(f)] The interval $[a,b]$ with $a>0$ lies outside the support of $F^{c_n,H_n}$ for all large n,
\end{itemize}
we have
\begin{align*}
{\rm P}\left({\rm no \ eigenvalues \ of} \ \bs_n \ {\rm appears \ in \ [a,b] \ {\rm for \ all \ large \ } n}\right)=1.
\end{align*}
\end{thm}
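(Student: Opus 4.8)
Since $\bs_n$ and $\underline\bs_n$ share their non-zero eigenvalues and $a>0$, it suffices to prove the statement for $\underline\bs_n$. I would begin with the standard truncation: replace each $x_{jk}$ by $(x_{jk}I(|x_{jk}|\le n^{\tau})-c_{jk})/\sigma_{jk}$ for a suitably small $\tau$ (centred and rescaled); by rank and perturbation inequalities for eigenvalues together with assumption (e) (the $(6+\delta)$-th moment makes the relevant tail probabilities summable in $n$), the eigenvalues of the modified $\underline\bs_n$ differ negligibly from those of the original one, so one may assume $|x_{jk}|\le n^{\tau}$, $\E x_{jk}=0$, $\E|x_{jk}|^2=1$. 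Next, fix $\varepsilon>0$ so that $[a-\varepsilon,b+\varepsilon]$ still lies outside the support of $F^{c_n,H_n}$ for all large $n$, with $\operatorname{dist}\big([a-\varepsilon,b+\varepsilon],\operatorname{supp}F^{c_n,H_n}\big)\ge d_0>0$; then $\underline m_n^0(z):=m_{F^{c_n,H_n}}(z)$ --- the solution of (\ref{al6}) with $c,H$ replaced by $c_n,H_n$ --- extends analytically and stays bounded, uniformly in $n$, on a fixed complex neighbourhood $\mathcal N$ of $[a-\varepsilon,b+\varepsilon]$.

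\textbf{The core estimate.} The crux is a sharp asymptotic expansion of $\E\, m_{F^{\underline\bs_n}}$ near $[a,b]$, valid down to the real axis. Writing $\underline m_n(z):=m_{F^{\underline\bs_n}}(z)$, the plan is to show that, uniformly for $z=u+iv$ with $u\in[a-\varepsilon,b+\varepsilon]$ and $v_n\le v\le v_0$ (with $v_0$ a small constant and $v_n$ a small negative power of $n$),
\begin{align*}
\E\,\underline m_n(z)=\underline m_n^0(z)+\frac1n\, g_n^{(1)}(z)+\frac1{n^2}\, g_n^{(2)}(z)+O\big(n^{-2-\eta}\big),
\end{align*}
where $g_n^{(1)},g_n^{(2)}$ are explicit functions, analytic in $z$ on $\mathbb{C}\setminus\operatorname{supp}F^{c_n,H_n}$ and bounded on $\mathcal N$. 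This would come from the usual resolvent machinery for $(\underline\bs_n-z\bi)^{-1}$ and the companion $(\bs_n-z\bi)^{-1}$: a leave-one-column-out decomposition, concentration of the quadratic forms $\br^{*}\ba\,\br$ about $\tr\ba$ (with $\br$ a column of $\bx_n$), and martingale/Burkholder estimates for the fluctuations that survive under $\E$. Assumption (e) is exactly what is spent here --- both on the $O(n^{-2-\eta})$ error control and on showing that the rare events on which an eigenvalue approaches $u$ (so that the resolvents blow up) have polynomially small probability, which is needed to push $v$ down to $v_n$. Since $m$ can be arbitrary, every trace and quadratic form must be expressed through functionals of $\bS_p=\bb_n\bb_n^{*}$ (whose norm is bounded by (b)) via the companion matrix $\bs_n=\frac1n\bb_n\bx_n\bx_n^{*}\bb_n^{*}$, even though the quadratic forms involve the $m\times m$ matrix $\bb_n^{*}\bb_n$.

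\textbf{Conclusion.} Granting the expansion, the theorem follows from a contour-counting identity and Borel--Cantelli. Let $\Gamma_n$ be the positively oriented boundary of the rectangle $[a-\varepsilon,b+\varepsilon]\times[-v_n,v_n]$, which together with its interior lies in $\mathbb{C}\setminus\operatorname{supp}F^{c_n,H_n}$ at distance $\ge d_0$. Since $n\,\underline m_n(z)=\sum_{j}(\mu_j-z)^{-1}$ over the eigenvalues $\mu_j$ of $\underline\bs_n$,
\begin{align*}
\E\big[\#\{j:\mu_j\in(a-\varepsilon,b+\varepsilon)\}\big]
&=-\frac{n}{2\pi i}\oint_{\Gamma_n}\E\,\underline m_n(z)\,dz\\
&=-\frac{n}{2\pi i}\oint_{\Gamma_n}\big(\E\,\underline m_n(z)-\underline m_n^0(z)\big)\,dz,
\end{align*}
the second equality because $\oint_{\Gamma_n}\underline m_n^0=0$ by analyticity. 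Inserting the expansion, the $\underline m_n^0$, $\frac1n g_n^{(1)}$ and $\frac1{n^2}g_n^{(2)}$ contributions each integrate to zero (being analytic inside $\Gamma_n$), so
\begin{align*}
\E\big[\#\{j:\mu_j\in(a-\varepsilon,b+\varepsilon)\}\big]\ \le\ \frac{n}{2\pi}\,|\Gamma_n|\cdot O\big(n^{-2-\eta}\big)=O\big(n^{-1-\eta}\big).
\end{align*}
Hence $\sum_n{\rm P}\big(\text{some eigenvalue of }\underline\bs_n\text{ lies in }[a,b]\big)\le\sum_n\E\big[\#\{j:\mu_j\in[a,b]\}\big]<\infty$, and Borel--Cantelli yields the claim (which transfers to $\bs_n$ since $0\notin[a,b]$).

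\textbf{Main obstacle, and the Bai--Yin extension.} The real difficulty is the uniform expansion above near the real axis: the bias and the fluctuation of $\underline m_n$ about $\underline m_n^0$ are already of order $n^{-1}$ at fixed $v$, and crude resolvent bounds degrade like $v^{-1}$, so the estimates must be carried into the regime $v=n^{-\alpha}$. This forces a bootstrap that lowers $v$ in stages, at each stage re-using, on the ``good'' region, the improved $O(1)$ control of the resolvents available once one knows --- with polynomially small failure probability --- that no eigenvalue is nearby; it is here, and in the second-order error term, that the surplus moments in (e) are consumed. Finally, the announced extension of the Bai--Yin law follows by applying the theorem to intervals $[\lambda^{+}+\eta,\,M]$, where $\lambda^{+}$ is the right endpoint of $\operatorname{supp}F^{c,H}$, $\eta>0$, and $M$ is a large constant, together with a rough a.s. bound $\|\bs_n\|\le M$: this gives $\limsup_n\lambda_{\max}(\bs_n)\le\lambda^{+}$, while the matching lower bound is immediate from the weak convergence in Theorem \ref{tmf}.
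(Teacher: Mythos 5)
Your overall architecture differs from the paper's: you propose a second\--order expansion of $\E\,\underline m_n(z)$ with error $O(n^{-2-\eta})$ followed by a contour\--counting identity and a first\--moment Borel\--Cantelli, whereas the paper proves only (i) an almost sure fluctuation bound $\sup_{u\in[a,b]}nv_n|m_n(z)-\E m_n(z)|\to 0$ via martingale decompositions and (ii) a bias bound $\sup_{u\in[a,b]}|\E\underline m_n(z)-\underline m_n^0(z)|=O(n^{-1})$, and then concludes with the Bai\--Silverstein argument based on imaginary parts and the weighted sums $\prod_k((u-\lambda_j)^2+kv_n^2)$. Unfortunately your route contains genuine gaps. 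The central one is the ``core estimate'' itself: an expansion $\E\,\underline m_n=\underline m_n^0+n^{-1}g^{(1)}+n^{-2}g^{(2)}+O(n^{-2-\eta})$, uniform down to $v=v_n$, is asserted but not derived, and it is far stronger than what the available moment assumption supports. Second\--order (Edgeworth\--type) expansions of the expected Stieltjes transform normally consume higher moments than $6+\delta$, and nothing in your sketch explains how the martingale/Burkholder machinery would produce an $O(n^{-2-\eta})$ error together with analyticity of $g^{(1)},g^{(2)}$ across $[a-\varepsilon,b+\varepsilon]$. The whole point of the Bai\--Silverstein scheme that the paper follows is that one never needs more than $o((nv_n)^{-1})$ control of $\underline m_n-\underline m_n^0$, which is achievable with $O(n^{-1})$ bias plus an a.s.\ fluctuation estimate.

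Two further problems. Your contour $\Gamma_n$ has vertical segments at $u=a-\varepsilon$ and $u=b+\varepsilon$ that cross the real axis, i.e.\ the region $|\Im z|<v_n$ where your expansion is not claimed to hold; there the resolvent is bounded only if no eigenvalue sits near $a-\varepsilon$ or $b+\varepsilon$, which is essentially the statement being proved, so the argument is circular as written (one would need either a Helffer\--Sj\"ostrand smoothing or a prior high\--probability exclusion zone, neither of which you supply). Finally, your truncation at a uniform level $n^{\tau}$ fails in this model: since $m$ is arbitrary (possibly infinite), $\sum_{j\le m}\sum_{k\le n}{\rm P}(|x_{jk}|>n^{\tau})$ need not be summable. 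The paper truncates at the column\--dependent level $n^{1/2-\eta}/b_j$ with $b_j^2=\sum_l|b_{lj}|^2$, exploiting $\sum_j b_j^2\le Cp$ to make the union bound finite; this weighting is also what makes the quadratic\--form moment bounds of Lemma \ref{le3} go through. Without it, both the truncation step and the subsequent concentration estimates break down.
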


\begin{remark}
From the next section, one may find that also the strategy of the proof of Theorem \ref{tms} is similar with \cite{Bai1998No}, the results in the present paper are definitely non-trivial extension of the existing results. In fact, the procedure used in \cite{Bai1998No} strongly depends on the foundational results proved in \cite{Yin1988}, which can not be applied in our model  directly. Thus we can not truncate the variables at a finite number. What is more, we need to find a sufficient low bound for the largest eigenvalue of $\mb S_n$, which has not been studied before. This is achieved by combining the Non-asymptotic theory of random matrix, see for instance \cite{ver2010}, which is an elementary interplay between probability and linear algebra, and an inequality for a kind of quadratic forms proved in this paper.
\end{remark}

As a direct application of above theorems, we have
\begin{thm}\label{tmt}
  Under assumptions (a-e) and  $\mb \Sigma_p=\mb I_p$, the largest eigenvalue of $\mb S_n$ convergences to $(1+\sqrt{c})^2$ almost surely.
\end{thm}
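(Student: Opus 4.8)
The plan is to deduce Theorem~\ref{tmt} from Theorems~\ref{tmf} and~\ref{tms} together with the explicit form of the Marchenko--Pastur law. When $\mb\Sigma_p=\mb I_p$ we have $\bb_n\bb_n^*=\mb I_p$, so $H_n=F^{\mb I_p}$ is the unit point mass at $1$ for every $n$, and hence $H$ is this same point mass. Equation~(\ref{al6}) then reduces to the classical Marchenko--Pastur equation, and $F^{c_n,H_n}$ is precisely the companion Marchenko--Pastur distribution attached to the ratio $c_n$ and a unit point mass; in particular its support is contained in $[0,(1+\sqrt{c_n})^2]$. Since $c_n\to c$, the right endpoint $(1+\sqrt{c_n})^2$ converges to $(1+\sqrt c)^2$, so for every fixed $\varepsilon>0$ and every $b\geq (1+\sqrt c)^2+\varepsilon$ the interval $[(1+\sqrt c)^2+\varepsilon,\,b]$ satisfies assumption~(f) of Theorem~\ref{tms} for all sufficiently large $n$.

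For the lower bound, Theorem~\ref{tmf} gives that, almost surely, $F^{\bs_n}$ converges weakly to the Marchenko--Pastur law $F$ of ratio $c$, whose support has right endpoint exactly $(1+\sqrt c)^2$. Because $F$ puts positive mass on $((1+\sqrt c)^2-\delta,(1+\sqrt c)^2)$ for every $\delta>0$, weak convergence forces $\bs_n$ to have at least one eigenvalue larger than $(1+\sqrt c)^2-\delta$ for all large $n$, almost surely; letting $\delta\downarrow0$ along a countable sequence gives $\liminf_n\lambda_{\max}(\bs_n)\ge(1+\sqrt c)^2$ almost surely.

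For the upper bound I would first invoke the a priori almost-sure upper bound on $\lambda_{\max}(\bs_n)$ obtained inside the proof of Theorem~\ref{tms} --- the ``sufficiently low upper bound for the largest eigenvalue'' mentioned in the remark following that theorem --- which we may write as: there is a constant $K>(1+\sqrt c)^2+1$ with $\lambda_{\max}(\bs_n)\le K$ for all large $n$, almost surely. Fix $\varepsilon\in(0,1)$ and apply Theorem~\ref{tms} to the interval $[(1+\sqrt c)^2+\varepsilon,\,K]$, which by the first paragraph lies outside the support of $F^{c_n,H_n}$ for all large $n$; then almost surely $\bs_n$ has no eigenvalue in it for all large $n$. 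Combined with $\lambda_{\max}(\bs_n)\le K$ eventually, this yields $\lambda_{\max}(\bs_n)<(1+\sqrt c)^2+\varepsilon$ for all large $n$, almost surely. Intersecting the exceptional null sets over $\varepsilon=1/j$, $j=2,3,\dots$, gives $\limsup_n\lambda_{\max}(\bs_n)\le(1+\sqrt c)^2$ almost surely, and combining this with the lower bound completes the proof.

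The only genuinely nontrivial ingredient is the a priori almost-sure boundedness of $\lambda_{\max}(\bs_n)$ used above. It cannot simply be read off from the classical Bai--Yin theorem applied to $\frac1n\bx_n\bx_n^*$, since $m$ may be infinite and then $\frac1n\bx_n\bx_n^*$ need not have bounded spectral norm; one has to exploit that $\bb_n^*\bb_n$ is a rank-$p$ orthogonal projection. As the remark indicates, this is exactly the place where the non-asymptotic random matrix estimates and the quadratic-form inequality proved in this paper enter; once that bound is available, Theorem~\ref{tmt} follows routinely from Theorems~\ref{tmf} and~\ref{tms}.
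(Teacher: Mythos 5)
Your overall strategy coincides with the paper's: with $\bS_p=\mb I_p$ the limiting law is the standard Marchenko--Pastur distribution supported on $[(1-\sqrt c)^2,(1+\sqrt c)^2]$, the lower bound $\liminf_n\lambda_{\max}(\bs_n)\ge(1+\sqrt c)^2$ follows from the weak convergence in Theorem \ref{tmf}, and the upper bound is to come from Theorem \ref{tms}. The paper's own proof is exactly this and is even terser than yours (it does not spell out the lower-bound half at all), so the structure of your argument is the right one.

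The one step that does not go through as written is your a priori bound. You assert that the proof of Theorem \ref{tms} supplies a \emph{constant} $K$ with $\lambda_{\max}(\bs_n)\le K$ for all large $n$ almost surely. It does not: because the entries are truncated at $n^{1/2-\eta}/b_j$ rather than at a fixed constant, the paper's ``primary bound'' (Vershynin's deviation inequality applied with $t=\sqrt{\log n}$) is (\ref{al11}), i.e.\ ${\rm P}(\|\bs_n\|>K\log n)=o(n^{-s})$, which after Borel--Cantelli gives only $\lambda_{\max}(\bs_n)\le K\log n$ eventually a.s.; taking $t$ constant in that lemma leaves a failure probability $n e^{-Ct^2}$ that does not even tend to zero, so a constant bound is not obtainable from this machinery. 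Consequently, applying Theorem \ref{tms} to the fixed interval $[(1+\sqrt c)^2+\varepsilon,K]$ still leaves open the possibility of eigenvalues in $(K,K\log n]$, and your upper-bound paragraph has a hole precisely there. To close it one must either extend the no-eigenvalue statement to intervals whose right endpoint grows with $n$, or give a separate argument excluding eigenvalues in $(K,K\log n]$. In fairness, the paper's two-line proof of Theorem \ref{tmt} is silent on this point as well; but since you made the constant bound the explicit load-bearing ingredient of the upper bound, you should either prove it or rework that step around the $K\log n$ estimate that is actually available.
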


\begin{remark}
This theorem can be viewed as an extension of Bai-Yin law in a different direction with \cite{Chafa2017On}.
\end{remark}

The rest of the paper is organised as follows. The next section is to give the proof of the main theorems while the last section lists some necessary Lemmas. We also note here that throughout this paper, $C$ stands for a constant that may take different values from one appearance to another.


\section{Proofs of the main theorems}

\subsection{Proof of theorem \ref{tmf} }
We firstly prove the theorem of LSD.  Noticing the results of Theorem 1.1. in \cite{Bai2008Large}, it is sufficient to verify the moment condition, i.e, for all $j$ and any non-random $p\times p$ matrix $\mb A$ with bounded norm,
$$\re|\mb x_j^*\bb_n^*\ba\bb_n\mb x_j-\rtr\ba\bS_p|^2=o(n^2).$$
This is in fact proved by the arguments before (\ref{al10}) in the proof of Lemma \ref{le3}.
\subsection{Proof of theorem \ref{tms} }
The main tools used in the proof are bounds on the moments of martingale difference sequences and properties of Stieltjes transform as well as some results in non-asymptotic analysis of random matrices.

The first step of the proof aims at truncating the variable of $\mb X_n$ as we need to deal with $x_{ij}^p$ for some $p$ larger than $6$. In \cite{Bai1998No}, under the model assumption $m=p$, the authors truncated the variables in $\mb X_{n,0}$ at $C$, a sufficiently large but finite number. This is due to Bai-Yin law states that the limit of the spectral norm of $\frac{1}{n}\mb X_{n,0}\mb X_{n,0}$ is $(1+\sqrt{c})^2$. However, this is invalid under the model of the present paper since $m$ can be arbitrary.
\subsubsection{Truncation, Centralization and Rescaling}\label{truncation}
Let $\bb_n=(b_{jk})$ and $b_{k}=\sqrt{\sum_{j=1}^p|b_{jk}|^2}$, it follows that
\begin{align*}
b_j\leq C_1, \ \quad \sum_{k=1}^mb_k^2\le p\|\bS_p\|\le C_2p.
\end{align*}
At first, we shall truncate the variables $x_{jk}$ at $n^{1/2-\eta}/b_{ j},j=1,\cdots,m,k=1,\cdots,n$ for $\eta=\frac\delta6/(6+\delta)\in(0,1/42)$.

Define $\hat x_{jk}=x_{jk}I(|x_{jk}|\le n^{1/2-\eta}/b_{j} ),\widehat\bx_n=(\hat x_{jk})$ and $\widehat\bs_n=\frac1n\bb_n\widehat\bx_n\widehat\bx_n^*\bb_n^*$. Applying assumption $(e)$, we get
\begin{align*}
P(\bx_n\neq \widehat\bx_n,i.o.)\le&\lim_{N\to\infty}\sum_{n=N}^{\infty}P(\bx_n\neq \widehat\bx_n)\le\lim_{N\to\infty}\sum_{n=N}^{\infty}\sum_{j=1}^m\sum_{k=1}^nP(x_{jk}\neq \widehat x_{jk})\\
\le&\lim_{N\to\infty}\sum_{n=N}^{\infty}\sum_{j=1}^m\sum_{k=1}^nP(|x_{jk}|>n^{1/2-\eta}/b_{ j} )\\
\le&\lim_{N\to\infty}\sum_{n=N}^{\infty}\sum_{j=1}^m\sum_{k=1}^n n^{-(1/2-\eta)(6+\delta)}b_{ j}^{6+\delta}\re|x_{jk}|^{6+\delta}\\
\le&\lim_{N\to\infty}\sum_{n=N}^{\infty}\sum_{j=1}^m\sum_{k=1}^n n^{-3-\delta/3}b_{ j}^{6+\delta}\re|x_{jk}|^{6+\delta}\\
\le&C_1^{4+\delta}\mu\lim_{N\to\infty}\sum_{n=N}^{\infty} n^{-2-\delta/3}\sum_{j=1}^m b_{ j}^{2}
\le C\lim_{N\to\infty}\sum_{n=N}^{\infty} n^{-1-\delta/3}\to0.
\end{align*}

Denote the eigenvalues of $\widehat\bs_n$ and $\widehat\bs_n-\re\widehat\bs_n$ by $\lambda_k$ and $\hat{\lambda_k}$ (in decreasing order), then $\lambda_k^{1/2}$ and $\hat{\lambda_k}^{1/2}$ are the $k$th largest singular values of $\frac1{\sqrt n}\bb_n\widehat\bx_n$ and $\frac1{\sqrt n}\bb_n\left(\widehat\bx_n-\re\widehat\bx_n\right)$ respectively. By Lemma \ref{le1}, one finds
\begin{align*}
\max_{k\le n}|\lambda_k^{1/2}-\hat\lambda_k^{1/2}|\le&\frac1{\sqrt n}\left\|\bb_n\widehat\bx_n-\bb_n\left(\widehat\bx_n-\re\widehat\bx_n\right)\right\|=\frac1{\sqrt n}\left\|\bb_n\re\widehat\bx_n\right\|\\
=&\sqrt{\frac1{ n}\left\|\bb_n\re\widehat\bx_n\re\widehat\bx_n^*\bb_n^*\right\|}\le\sqrt{\frac1{ n}\rtr\left(\bb_n\re{\widehat\bx}_n\re{\widehat\bx}_n^*\bb_n^*\right)}\\
=&\sqrt{\frac1n\sum_{l=1}^p\sum_{k=1}^n\sum_{j_1,j_2=1}^mb_{l{j_1}}\overline b_{lj_2}\re\hat x_{j_1k}\re\overline{\hat x}_{j_2k}}\\
=&\sqrt{\sum_{l=1}^p\sum_{j_1,j_2=1}^mb_{l{j_1}}\overline b_{lj_2}\re\hat x_{j_11}\re\overline{\hat x}_{j_21}}\\
\le&\left\{\sum_{j_1,j_2=1}^m\left|\re\hat x_{j_11}\right|\left|\re{\hat x}_{j_21}\right|\sqrt{\sum_{l=1}^p |b_{l{j_1}}|^2}\sqrt{\sum_{l=1}^p| b_{lj_2}|^2}\right\}^{1/2}\\
=&\left\{\sum_{j_1,j_2=1}^m\left|\re\hat x_{j_11}\right|\left|\re{\hat x}_{j_21}\right|b_{j_1}b_{j_2}\right\}^{1/2}.
\end{align*}
Note that
\begin{align*}
\left|\re\hat x_{j_11}\right|\le&\re\left|x_{j_11}\right|I\left(|x_{j_11}|>n^{1/2-\eta}/b_{j_1}\right)
\le\frac{b_{j_1}^{5+\delta}}{n^{5/2+\delta/3}}\re\left|x_{j_11}\right|^{6+\delta}\le C_1^{4+\delta}\mu\frac{b_{j_1}}{n^{5/2}}.
\end{align*}
Thus, we have
\begin{align*}
\max_{k\le n}|\lambda_k^{1/2}-\hat\lambda_k^{1/2}|\le&\frac{C}{n^{5/2}}\left\{\sum_{j_1,j_2=1}^mb_{j_1}^2b_{j_2}^2\right\}^{1/2}\le Cn^{-3/2}.
\end{align*}

Write $\widetilde\bx_n=(\tilde x_{jk})$ and $\widetilde\bs_n=\frac1n\bb_n\widetilde\bx_n\widetilde\bx_n^*\bb_n^*$ where $\tilde x_{jk}=\left(\hat x_{jk}-\re\hat x_{jk}\right)/\sigma_j$ and $\sigma_j^2=\re\left|\hat x_{j1}-\re\hat x_{j1}\right|^2$. Using assumption $(e)$, one gets
\begin{align}\label{al1}
1-\sigma_j^2=&\re\left|x_{j1}\right|^2I\left(|x_{j1}|\ge n^{1/2-\eta}/b_j\right)+\left|\re x_{j1}I\left(|x_{j1}|\ge n^{1/2-\eta}/b_j\right)\right|^2\\
\le&2\re\left|x_{j1}\right|^2I\left(|x_{j1}|\ge n^{1/2-\eta}/b_j\right)\notag\\
\le&\frac{2b_j^{4+\delta}}{n^{2+\delta/3}}\re\left|x_{j1}\right|^{6+\delta}\le\frac{2\mu b_j^{4+\delta}}{n^{2}}\le\frac{Cb_j^{2}}{n^{2}}\notag.
\end{align}
Let $\tilde\lambda_k$ denote the $k$th largest eigenvalues of $\widetilde\bs_n$, then it follows that by Lemma \ref{le1} and (\ref{al1}),
\begin{align*}
&\max_{k\le n}|\hat\lambda_k^{1/2}-\tilde\lambda_k^{1/2}|\le\frac1{\sqrt n}\left\|\bb_n\left(\widehat\bx_n-\re\widehat\bx_n-\widetilde\bx_n\right)\right\|\\
\le&\sqrt{\frac1{n}\rtr\left[\bb_n\left(\widehat\bx_n-\re\widehat\bx_n-\widetilde\bx_n\right)\left(\widehat\bx_n-\re\widehat\bx_n-\widetilde\bx_n\right)^*\bb_n^*\right]}\\
=&\sqrt{\frac1{n}\sum_{l=1}^p\sum_{k=1}^n\sum_{j_1,j_2=1}^mb_{lj_1}\overline b_{lj_2}(1-\sigma_{j_1}^{-1})(1-\sigma_{j_2}^{-1})
\left(\hat x_{j_1k}-\re\hat x_{j_1k}\right)\left(\hat x_{j_2k}-\re\hat x_{j_2k}\right)}\\
\le&\sqrt{\sum_{l=1}^p\sum_{j_1,j_2=1}^m\frac{4n^{1-2\eta}}{b_{j_1}b_{j_2}}|b_{lj_1}b_{lj_2}||(1-\sigma_{j_1}^{-1})(1-\sigma_{j_2}^{-1})}|\\
\le&\left\{\sum_{j_1,j_2=1}^m\frac{4n^{1-2\eta}}{b_{j_1}b_{j_2}}|(1-\sigma_{j_1}^{-1})(1-\sigma_{j_2}^{-1})|\sqrt{\sum_{l=1}^p |b_{lj_1}|^2}
\sqrt{\sum_{l=1}^p |b_{lj_2}|^2}\right\}^{1/2}\\
\le&\left\{\sum_{j_1,j_2=1}^m{n}(1-\sigma_{j_1}^{2})(1-\sigma_{j_2}^{2})\right\}^{1/2}
\le\frac C{n^{3/2}}\left\{\sum_{j_1,j_2=1}^m{b_{j_1}^{2}}{b_{j_2}^{2}}\right\}^{1/2}\le\frac C{n^{1/2}}\to0.
\end{align*}

For simplicity, the truncated and recentralized variables are still denoted by $x_{jk}$. We assume in the following
\begin{itemize}
\item[(1)] The variables $\{x_{jk},j=1,2,\cdots,m;k=1,2,\cdots,n\}$ are independent.
\item[(2)] $\re(x_{jk})=0$ and ${\rm Var}(x_{jk})=1$.
\item[(3)] $|x_{jk}|\le n^{1/2-\eta}/b_j$.
\item[(4)] $\sup_{n,j,k}\re|x_{jk}|^{6+\delta}\le M$.
\end{itemize}

\subsubsection{A primary bound on the largest eigenvalue of $\bs_n$}

This part is to give a primary bound on the largest eigenvalue of $\bs_n$. We need the following Lemma
\begin{lemma}[Theorem 5.44 in \cite{ver2010}]\label{vsn}
let $\bf A$ be an $N\times M$ matrix whose row $A_i$ are independent random row vectors in $\mathbb{C}^M$ with the common second moment matrix $\bf \Sigma=\rm E A_i'A_i$. Let $l$ be a number such that $\sqrt{A_iA_i'}\leq \sqrt{l}$ almost surely for all $i$. Then for every $t>0$, the following inequality holds with probability at least $1-n\exp^{-Ct^2}$:
\begin{align}
  \|{\bf A}\|\leq {\bf\|\Sigma\|^{1/2}}\sqrt{N}+ t \sqrt{l},
\end{align}
here $C$ is a constant.
\end{lemma}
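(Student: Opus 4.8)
The plan is to deduce the operator-norm bound on $\mb A$ from a concentration estimate for the sample second-moment matrix $\tfrac1N\mb A^*\mb A$ around $\bS$, and to obtain that estimate from a matrix Bernstein inequality; the boundedness hypothesis $\sqrt{A_iA_i^*}\le\sqrt l$ is precisely what makes both ingredients work.

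\textbf{Reduction.} Since $\|\mb A\|^2=\|\mb A^*\mb A\|=N\,\|\tfrac1N\mb A^*\mb A\|$, the triangle inequality for $\|\cdot\|$ and subadditivity of $x\mapsto x^{1/2}$ on $[0,\infty)$ give
\begin{align*}
\|\mb A\|=\sqrt N\,\Big\|\tfrac1N\mb A^*\mb A\Big\|^{1/2}\le\sqrt N\Big(\|\bS\|+\Big\|\tfrac1N\mb A^*\mb A-\bS\Big\|\Big)^{1/2}\le\sqrt N\,\|\bS\|^{1/2}+\sqrt N\,\Big\|\tfrac1N\mb A^*\mb A-\bS\Big\|^{1/2}.
\end{align*}
Writing $\delta:=t\sqrt{l/N}$, if we can show that $\|\tfrac1N\mb A^*\mb A-\bS\|\le\|\bS\|^{1/2}\delta+\delta^2$ holds on an event of probability at least $1-ne^{-Ct^2}$, then, since $(\|\bS\|^{1/2}+\delta)^2\ge\|\bS\|+\|\bS\|^{1/2}\delta+\delta^2$, we get $\|\mb A\|\le\sqrt N(\|\bS\|^{1/2}+\delta)=\|\bS\|^{1/2}\sqrt N+t\sqrt l$, which is the claim.

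\textbf{Concentration.} Write $\tfrac1N\mb A^*\mb A-\bS=\tfrac1N\sum_{i=1}^N\mb X_i$ with $\mb X_i:=A_i^*A_i-\bS$ independent, mean zero, Hermitian $M\times M$ matrices. From $\|\bS\|=\|\E A_i^*A_i\|\le\E\|A_i\|^2\le l$ one gets $\|\mb X_i\|\le\|A_i^*A_i\|+\|\bS\|\le 2l$ almost surely, and from the rank-one identity $(A_i^*A_i)^2=\|A_i\|^2A_i^*A_i\le l\,A_i^*A_i$ one gets $\E\mb X_i^2=\E(A_i^*A_i)^2-\bS^2\le l\bS$, hence $\|\sum_{i=1}^N\E\mb X_i^2\|\le Nl\|\bS\|$. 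The matrix Bernstein inequality (or, for a self-contained argument, a $\tfrac14$-net reduction on the unit sphere of $\mathbb C^M$ combined with the scalar Bernstein inequality applied to $\tfrac1N\sum_i(|\langle A_i,x\rangle|^2-\langle\bS x,x\rangle)$ for fixed $x$) then yields
\begin{align*}
{\rm P}\Big(\Big\|\sum_{i=1}^N\mb X_i\Big\|\ge s\Big)\le 2M\exp\!\Big(-\,\frac{c\,s^2}{Nl\|\bS\|+l\,s}\Big),\qquad s>0,
\end{align*}
where the ambient dimension $M$ plays the role of $n$ in the statement. Taking $s=N(\|\bS\|^{1/2}\delta+\delta^2)=t\sqrt{Nl\|\bS\|}+lt^2$ makes the exponent $\ge c't^2$, and dividing by $N$ gives the bound on $\|\tfrac1N\mb A^*\mb A-\bS\|$ needed in the reduction step; the stray absolute constants are absorbed into the free constant $C$ by rescaling $t$.

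\textbf{Main obstacle.} Everything outside the concentration step is routine. The crux is that the single hypothesis $A_iA_i^*\le l$ controls \emph{both} $\max_i\|\mb X_i\|$ and $\|\sum_i\E\mb X_i^2\|$, so that matrix Bernstein produces a deviation whose dependence on $t$ is linear, matching the $t\sqrt l$ term rather than giving a quadratic one. If matrix Bernstein is quoted as a black box, only the two variance/uniform-bound computations above remain; if one insists on deriving it from scratch (via the matrix Laplace-transform/Golden--Thompson method, or the net argument together with a careful union bound over the $9^M$ net points), that derivation is the genuinely substantive part of the argument.
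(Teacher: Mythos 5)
Your proof is correct, but note that the paper does not prove this lemma at all: it is quoted verbatim as Theorem 5.44 of Vershynin's survey, with only a remark that the real-case statement extends to complex entries. What you have written is essentially Vershynin's own argument (reduce $\|\mathbf A\|$ to the deviation of $\frac1N\mathbf A^*\mathbf A$ from $\boldsymbol\Sigma$, then apply non-commutative Bernstein using that $A_iA_i^*\le l$ controls both the uniform bound and the matrix variance), and since matrix Bernstein is stated for Hermitian matrices your derivation also supplies the complex-case justification the paper waves at; the only cosmetic point is that your displayed subadditivity chain is not the step you actually use---the parenthetical comparison $(\|\boldsymbol\Sigma\|^{1/2}+\delta)^2\ge\|\boldsymbol\Sigma\|+\|\boldsymbol\Sigma\|^{1/2}\delta+\delta^2$ is what closes the reduction.
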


\begin{remark}
The original theorem in fact consider the real case, however, as indicated in the corresponding paper, one can easily adjust it to the complex case.
\end{remark}

Let $\bf A=\frac{1}{\sqrt{n}}\bf X_n^*\bf B_n^*$, $N=n$, $M=p$. We have $\|\bf \Sigma\|^{1/2}=\|\rm E A_iA_i'\|^{1/2}\leq C$. Then apply Lemma \ref{le3}, we have for any $i$
\begin{align*}
{\rm P}({|\mb x_i^*{\bf B_n^*}{\bf B_n}\mb x_i-\rtr{\bf B_n}{\bf B_n^*}|>n})\leq \frac{{\rm E}|\mb x_i^*{\bf B_n^*}{\bf B_n}\mb x_i-\rtr{\bf B_n}{\bf B_n^*}|^l}{n^l}\leq \frac{C_l}{n^{\eta l}}
\end{align*}
Choosing $l=\frac{2}{\eta}+1$, we have $${\rm P}(\sup_{i}{A_iA_i'>Cn})\leq \frac{n}{n^{2+\eta}},$$ which is summable.

Then letting $t=\sqrt{\log n}$ and $l=n$, by Lemma \ref{vsn}, we arrive at for any $s$,
\begin{align}\label{al11}
{\rm P}(\|{\bf S_n}\|>K\log n)=o(n^{-s}),
\end{align}
here $K$ is a constant only depend on $s$.
\subsubsection{Convergence of random part}
Let $m_n(z)=m_{F^{\bs_n}}(z)$, $\underline m_n(z)=m_{F^{\underline\bs_n}}(z)$ and $\underline m_n^0(z)=m_{F^{c_n,H_n}}(z)$, the goal of this part is to show that for $z=u+iv_n=u+in^{-3\eta/98}$
\begin{align}\label{al18}
\sup_{u\in[a,b]}nv_n|m_n(z)-\re m_n(z)|\xrightarrow{a.s.}0,\quad n\to\infty.
\end{align}
To begin with, we introduce some notations. For $j=1,2,\cdots,n$, denote
\begin{align*}
&{\bf x}_j=(x_{1j},x_{2j},\cdots,x_{mj})',\quad \br_j=\frac1{\sqrt n}\bb_n{\bf x}_j,\quad \bs_{nj}=\bs_n-\br_j\br_j^*,\\
&\bd_n=\bs_n-z\bi_p,\quad \bd_{nj}=\bd_n-\br_j\br_j^*,\quad \bd_{nj\underline j}=\bd_{nj}-\br_{\underline j}\br_{\underline j}^*,\\
&\phi_j=\br_j^*\bd_{nj}^{-1}\br_j-\frac1n\rtr\left(\bd_{nj}^{-1}\bS_p\right),\quad \hat\phi_j=\br_j^*\bd_{nj}^{-1}\br_j-\frac1n\re\rtr\left(\bd_{nj}^{-1}\bS_p\right),\\
&\xi_j=\br_j^*\bd_{nj}^{-2}\br_j-\frac1n\rtr\left(\bd_{nj}^{-2}\bS_p\right),\quad \hat\phi_{j\underline j}=\br_{\underline j}^*\bd_{nj\underline j}^{-1}\br_{\underline j}-\frac1n\re\rtr\left(\bd_{nj\underline j}^{-1}\bS_p\right).
\end{align*}
Also let
\begin{align*}
&\rho_j=\frac1{1+\br_j^*\bd_{nj}^{-1}\br_j},\quad \hat\rho_j=\frac1{1+n^{-1}\rtr(\bd_{nj}^{-1}\bS_p)},\quad b_n=\frac1{1+n^{-1}\re\rtr(\bd_{n1}^{-1}\bS_p)},\\
&\rho_{j\underline j}=\frac1{1+\br_{\underline j}^*\bd_{nj\underline j}^{-1}\br_{\underline j}},\quad b_{1n}=\frac1{1+n^{-1}\re\rtr(\bd_{n,1,2}^{-1}\bS_p)}.
\end{align*}

Firstly, we want to find the bounds of $\re\rho_1$ and $b_n$. For this purpose, we need the bounds on moments of $\phi_j$ and $\hat\phi_j$ for $j=1,\cdots,n$.

Using Lemma \ref{le3}, we have for $l\ge1$,
\begin{align*}
\re|\phi_j|^{2l}=\frac1{n^{2l}}\re\left|{\bf x}_j^*\bb_n^*\bd_{nj}^{-1}\bb_n{\bf x}_j-\rtr\left(\bd_{nj}^{-1}\bS_p\right)\right|^{2l}\le C_ln^{-2\eta l}v_n^{-2l}.
\end{align*}
Let $\re_0(\cdot)$ denote expectation and $\re_k(\cdot)$ denote conditional expectation with respect to the $\sigma$-field generated by $\br_1,\cdots,\br_k$. Applying Lemma \ref{le4} and the identity
\begin{align}\label{al4}
\left(\ba+\br\br^*\right)^{-1}=\ba^{-1}-\frac{\ba^{-1}\br\br^*\ba^{-1}}{1+\br^*\ba\br},
\end{align}
it follows that for $l\ge1$,
\begin{align}\label{al26}
&\re\left|\hat\phi_j-\phi_j\right|^{2l}=\re\left|\hat\phi_1-\phi_1\right|^{2l}
=n^{-2l}\re\left|\sum_{j=2}^n\left[\re_j\rtr\left(\bS_p\bd_{n1}^{-1}\right)-\re_{j-1}\rtr\left(\bS_p\bd_{n1}^{-1}\right)\right]\right|^{2l}\\
=&n^{-2l}\re\left|\sum_{j=2}^n\left(\re_j-\re_{j-1}\right)\rtr\left[\bS_p\left(\bd_{n1}^{-1}-\bd_{n1j}^{-1}\right)\right]\right|^{2l}\notag
=n^{-2l}\re\left|\sum_{j=2}^n\left(\re_j-\re_{j-1}\right)\frac{\br_j^*\bd_{n1j}^{-1}\bS_p\bd_{n1j}^{-1}\br_j}{1+\br_j^*\bd_{n1j}^{-1}\br_j}\right|^{2l}\notag\\
\le&C_ln^{-2l}\re\left(\sum_{j=2}^n\left|\left(\re_j-\re_{j-1}\right)\frac{\br_j^*\bd_{n1j}^{-1}\bS_p\bd_{n1j}^{-1}\br_j}{1+\br_j^*\bd_{n1j}^{-1}\br_j}\right|^2\right)^{l}\notag\\
\le&C_ln^{-2l}\re\left[\sum_{j=2}^n\left(\frac{\left|\br_j^*\left(\bd_{n1j}^*\right)^{-1}\bd_{n1j}^{-1}\br_j\right|}{\Im\left(\br_j^*\bd_{n1j}^{-1}\br_j\right)}\right)^2\right]^{l}
\le C_ln^{-l}v_n^{-2l}.\notag
\end{align}
From this we know, for $l\ge1$,
\begin{align}\label{al23}
\re\left|\hat\phi_j\right|^{2l}\le C_ln^{-2\eta l}v_n^{-2l}.
\end{align}

Using (\ref{al5}) and (\ref{al13}) in \ref{se2}, we obtain that for large $n$
\begin{align*}
\sup_{u\in[a,b]}|\re\rho_1|=\sup_{u\in[a,b]}|z\re\underline m_n(z)|\le&\sup_{u\in[a,b]}|z\re\underline m_n(z)-z\underline m_n^0(z)|+\sup_{u\in[a,b]}|z\underline m_n^0(z)|\\
=&o(v_n)+C\le C+1.
\end{align*}
It is known that $\rho_j$, $\hat\rho_j$, and $b_n$ all bounded in absolute value by $|z|/v_n$. Noticing $b_n=\rho_1+\rho_1b_n\hat\phi_1$, one finds
\begin{align*}
\sup_{u\in[a,b]}|b_n|=&\sup_{u\in[a,b]}\left|\re\rho_1+\re\rho_1b_n\hat\phi_1\right|\\
\le&\sup_{u\in[a,b]}\left|\re\rho_1\right|+\sup_{u\in[a,b]}\frac{|z|^2}{v_n^2}\re^{1/2}\left|\hat\phi_1\right|^2\\
\le&(C+1)+\sup_{u\in[a,b]}\frac{|z|^2}{n^{1/2}v_n^3}\le C+2\triangleq C_0.
\end{align*}

Next, let $S_n'$ be a set contains $n^2$ elements, equally spaced in $[a,b]$. Note that $\left|m_n(u_1+iv_n)-m_n(u_2+iv_n)\right|\le|u_1-u_2|v_n^{-2}$, our goal follows if we can show
\begin{align*}
\max_{u\in S_n'}nv_n\left|m_n(z)-\re m_n(z)\right|\xrightarrow{a.s.}0.
\end{align*}

Write
\begin{align*}
m_n(z)-\re m_n(z)=&\frac1p\sum_{k=1}^n\left(\re_{k-1}\rtr\bd_n^{-1}-\re_k\rtr\bd_n^{-1}\right)=-\frac1p\sum_{k=1}^n\left(\re_{k-1}-\re_k\right)\rho_k\br_k^*\bd_{nk}^{-2}\br_k\\
=&-\frac1p\sum_{k=1}^n\left(\re_{k-1}-\re_k\right)\rho_k\xi_k-\frac1{pn}\sum_{k=1}^n\left(\re_{k-1}-\re_k\right)\rho_k\rtr\bd_{nk}^{-2}\bS_p\\
=&-\frac1p\sum_{k=1}^n\left(\re_{k-1}-\re_k\right)\rho_k\xi_k-\frac1{pn}\sum_{k=1}^n\left(\re_{k-1}-\re_k\right)\hat\rho_k\rtr\bd_{nk}^{-2}\bS_p\\
&+\frac1{pn}\sum_{k=1}^n\left(\re_{k-1}-\re_k\right)\rho_k\hat\rho_k\phi_k\rtr\bd_{nk}^{-2}\bS_p\\
=&-\frac1p\sum_{k=1}^n\left(\re_{k-1}-\re_k\right)\rho_k\xi_k
+\frac1{pn}\sum_{k=1}^n\left(\re_{k-1}-\re_k\right)\rho_k\hat\rho_k\phi_k\rtr\bd_{nk}^{-2}\bS_p\\
\triangleq&W_1+W_2.
\end{align*}
From Assumption $(f)$, an $\underline\varepsilon>0$ exists for which $[a-2\underline\varepsilon,b+2\underline\varepsilon]$ also satisfies $(f)$. Let $a'=a-\underline\varepsilon,b'=b+\underline\varepsilon$, and $F_{nk}$ be the ESD of the matrix $\bs_{nk}$. From $(\ref{al14})$ in \ref{se2} and Lemma \ref{le9}, for any $l>0$, we have almost surely
\begin{align}\label{al15}
\max_{k\le n}\re_k\left(F_{nk}{[a',b']}\right)^l=o_{a.s.}(v_n^{4l}).
\end{align}
Now define
\begin{align*}
G_k=&I\left\{\left[\re_{k-1}\left((F_{nk}{[a',b']}\right)\le v_n^4\right]\bigcap\left[\re_{k-1}\left((F_{nk}{[a',b']}\right)^2\le v_n^8\right]\right\}\\
=&I\left\{\left[\re_{k}\left((F_{nk}{[a',b']}\right)\le v_n^4\right]\bigcap\left[\re_{k}\left((F_{nk}{[a',b']}\right)^2\le v_n^8\right]\right\}.
\end{align*}
It follows that from (\ref{al15})
\begin{align*}
{\rm P}\left(\bigcup_{k=1}^n\{G_k=0\}, \ {\rm i.o.}\right)=0.
\end{align*}
Therefore, we have, for any $\varepsilon>0$,
\begin{align}\label{al16}
&{\rm P}\left(\max_{u\in S_n'}|nv_nW_1|>\varepsilon, \ {\rm i.o.}\right)\\
=&{\rm P}\left(\left\{\max_{u\in S_n'}|nv_nW_1|>\varepsilon\right\}\bigcap\left[\left(\bigcup_{k=1}^n\{G_k=0\}\right)\bigcup\left(\bigcap_{k=1}^n\{G_k=1\}\right)\right], \ {\rm i.o.}\right)\notag\\
\le&{\rm P}\left(\left\{\max_{u\in S_n'}|nv_nW_1|>\varepsilon\right\}\bigcap\left(\bigcap_{k=1}^n\{G_k=1\}\right), \ {\rm i.o.}\right)\notag\\
\le&{\rm P}\left(\max_{u\in S_n'}\left|v_n\sum_{k=1}^n\left(\re_{k-1}-\re_k\right)\rho_k\xi_k G_k\right|>c_n\varepsilon, \ {\rm i.o.}\right)\notag
\end{align}
and
\begin{align*}
&{\rm P}\left(\max_{u\in S_n'}|nv_nW_2|>\varepsilon, \ {\rm i.o.}\right)\\
\le&{\rm P}\left(\left\{\max_{u\in S_n'}|nv_nW_2|>\varepsilon\right\}\bigcap\left(\bigcap_{k=1}^n\{G_k=1\}\right), \ {\rm i.o.}\right)\\
\le&{\rm P}\left(\max_{u\in S_n'}\left|v_n\frac1{p}\sum_{k=1}^n\left(\re_{k-1}-\re_k\right)\rho_k\hat\rho_k\phi_k\rtr\left(\bd_{nk}^{-2}\bS_p\right) G_k\right|>\varepsilon, \ {\rm i.o.}\right).
\end{align*}
Note that for ${u\in S_n'}$
\begin{align*}
&\left|\rho_k\xi_k G_k\right|^2=\left|\rho_k\xi_k G_k\right|^2I(|\rho_k<2C_0|)+\left|\rho_k\xi_k G_k\right|^2I(|\rho_k|\ge2C_0)\\
\le&4C_0^2\left|\xi_k G_k\right|^2+\left(\left|\rho_k\br_k^*\bd_{nk}^{-2}\br_k\right|^2+\left|\frac1n\rho_k\rtr\left(\bd_{nk}^{-2}\bS_p\right)\right|^2\right)I(|\rho_k|^{-1}\le\frac1{2C_0})\\
\le&4C_0^2\left|\xi_k G_k\right|^2+C\left(v_n^{-2}+c_n^2\frac{|z|^2}{v_n^2}v_n^{-4}\right)I(|b_n^{-1}+\hat\phi_k|\le\frac1{2C_0})\\
\le&4C_0^2\left|\xi_k G_k\right|^2+C v_n^{-6}I(|\hat\phi_k|\ge\frac1{2C_0}).
\end{align*}
By Lemma \ref{le3}, Lemma \ref{le6}, and the fact $\{\left(\re_{k-1}-\re_k\right)\rho_k\xi_k G_k\}$ forms a martingale difference sequence, we have for each $u\in S_n'$, $l\ge1$, and $t>(9\eta+49)l/(95\eta)$,
\begin{align*}
&\re\left|v_n\sum_{k=1}^n\left(\re_{k-1}-\re_k\right)\rho_k\xi_k G_k\right|^{2l}\\
\le&C_l\left[\re\left(\sum_{k=1}^n\re_{k-1}\left|v_n\rho_k\xi_k G_k\right|^2\right)^l+\sum_{k=1}^n\re\left|v_n\rho_k\xi_k G_k\right|^{2l}\right]\\
\le&C_l\left[\re\left(\sum_{k=1}^n\left(v_n^2\re_{k-1}\left|\xi_k G_k\right|^2+v_n^{-4}{\rm P}(|\hat\phi_k|\ge\frac1{2C_0})\right)\right)^l+\sum_{k=1}^n|z|^{2l}\re\left|\xi_k \right|^{2l}\right]\\
\le&C_l\left[\re\left(\sum_{k=1}^n\left(v_n^2\re_{k-1}G_k\frac{\rtr\left(\bd_{nk}^{-2}\bS_p(\bd_{nk}^*)^{-2}\bS_p\right)}{n^{2}}+v_n^{-4}{\rm P}(|\hat\phi_k|\ge\frac1{2C_0})\right)\right)^l+\sum_{k=1}^n|z|^{2l}\frac{n^{2(1-\eta)l}\|\bd_{nk}^{-2}\|^{2l}}{n^{2l}}\right]\\
\le&C_l\left[\re\left(\sum_{k=1}^n\left(v_n^2\re_{k-1}G_k\frac{\rtr\left(\bd_{nk}^{-2}(\bd_{nk}^*)^{-2}\right)}{n^{2}}+v_n^{-4}{\rm P}(|\hat\phi_k|\ge\frac1{2C_0})\right)\right)^l+\frac1{n^{\eta l-1}v_n^{4l}}\right]\\
\le&C_l\left[\re\left(\sum_{k=1}^nv_n^2\re_{k-1}G_k\frac{\rtr\left(\bd_{nk}^{-2}(\bd_{nk}^*)^{-2}\right)}{n^{2}}\right)^l+n^{l-1}\sum_{k=1}^nv_n^{-4l}{\rm P}(|\hat\phi_k|\ge\frac1{2C_0})+\frac1{n^{\eta l-1}v_n^{4l}}\right]\\
\le&C_l\left[\re\left(\sum_{k=1}^nv_n^2\re_{k-1}G_k\frac{\rtr\left(\bd_{nk}^{-2}(\bd_{nk}^*)^{-2}\right)}{n^{2}}\right)^l+n^{l-1}\sum_{k=1}^nv_n^{-4l}\re|\hat\phi_k|^{2t}+\frac1{n^{\eta l-1}v_n^{4l}}\right]\\
\le&C_l\left[\re\left(\sum_{k=1}^nv_n^2\re_{k-1}G_k\frac{\rtr\left(\bd_{nk}^{-2}(\bd_{nk}^*)^{-2}\right)}{n^{2}}\right)^l+n^{l}v_n^{-4l}n^{-2\eta t}v_n^{-2t}+\frac1{n^{\eta l-1}v_n^{4l}}\right]\\
\le&C_l\left[\re\left(\sum_{k=1}^nv_n^2\re_{k-1}G_k\frac{\rtr\left(\bd_{nk}^{-2}(\bd_{nk}^*)^{-2}\right)}{n^{2}}\right)^l
+\frac{v_n^{2l}}{n^{95\eta t/49-9\eta l/49-l}}+\frac{v_n^{2l}}{n^{40\eta l/49}}\right]\\
\le&C_l\left[\re\left(\sum_{k=1}^nv_n^2\re_{k-1}G_k\frac{\rtr\left(\bd_{nk}^{-2}(\bd_{nk}^*)^{-2}\right)}{n^{2}}\right)^l +v_n^{2l}
\right].
\end{align*}
Let $\lambda_{kj}$ denote the $j$-th largest eigenvalue of $\bs_{nk}$. By (\ref{al15}), we get
\begin{align*}
&\sum_{k=1}^n\re_{k-1}G_k{\rtr\left(\bd_{nk}^{-2}(\bd_{nk}^*)^{-2}\right)}
=\sum_{k=1}^n\re_{k-1}G_k\sum_{j=1}^p\frac1{\left((\lambda_{kj}-u)^2+v_n^2\right)^2}\\
=&\sum_{k=1}^n\re_{k-1}G_k\left[\sum_{\lambda_{kj}\notin[a',b']}\frac1{\left((\lambda_{kj}-u)^2+v_n^2\right)^2}
+\sum_{\lambda_{kj}\in[a',b']}\frac1{\left((\lambda_{kj}-u)^2+v_n^2\right)^2}\right]\\
\le&\sum_{k=1}^n\re_{k-1}G_k\left[p\underline\varepsilon^{-4}
+pv_n^{-4}F_{nk}([a',b'])\right]\le Cn^2
\end{align*}
Combining the above two inequalities, it yields  for each $u\in S_n'$, $l\ge1$,
\begin{align}\label{al17}
\re\left|v_n\sum_{k=1}^n\left(\re_{k-1}-\re_k\right)\rho_k\xi_k G_k\right|^{2l}\le C_lv_n^{2l}.
\end{align}
By (\ref{al16}) and (\ref{al17}), we conclude that
\begin{align*}
&{\rm P}\left(\max_{u\in S_n'}|nv_nW_1|>\varepsilon\right)
\le C_ln^2\re\left|v_n\sum_{k=1}^n\left(\re_{k-1}-\re_k\right)\rho_k\xi_k G_k\right|^{2l}\le C_ln^{2-3\eta l/49}
\end{align*}
which is summable when $l>49/\eta$. Therefore,
\begin{align}\label{al19}
\max_{u\in S_n'}|W_1|=o_{{\rm a.s.}}(1/(nv_n)).
\end{align}

It is obvious that for $u\in S_n'$
\begin{align*}
&\left|\frac1{p}\rho_k\hat\rho_k\phi_k\rtr\left(\bd_{nk}^{-2}\bS_p\right) G_k\right|^2\\
\le&\left|\frac1{p}\rho_k\hat\rho_k\phi_k\rtr\left(\bd_{nk}^{-2}\bS_p\right) G_k\right|^2I\left(|\rho_k|\le 2C_0 \ {\rm and} \ |\hat\rho_k|\le 3C_0\right)\\
&+\left|\frac1{p}\rho_k\hat\rho_k\phi_k\rtr\left(\bd_{nk}^{-2}\bS_p\right) G_k\right|^2\left[I\left(|\rho_k|>2C_0 \right)+I\left(|\rho_k|\le 2C_0 \ {\rm and} \ |\hat\rho_k|> 3C_0\right)\right]\\
\le&36C_0^4\left|\frac1{p}\phi_k\rtr\left(\bd_{nk}^{-2}\bS_p\right) G_k\right|^2+c_n^{-2}v_n^{-2}\frac{|z|^2}{v_n^2}\left|\phi_k\right|^2\bigg[I\left(|\hat\phi_k|>\frac1{2C_0} \right)\\
&+I\left(|\rho_k|\le 2C_0 \ {\rm and} \ |\rho_k^{-1}+\phi_k|\le\frac1{3C_0}\right)\bigg]\\
\le&36C_0^4 G_k\left|\frac1{p}\rtr\left(\bd_{nk}^{-2}\bS_p\right)\right|^2|\phi_k|^2+Cv_n^{-4}\left|\phi_k\right|^2\bigg[I\left(|\hat\phi_k|\ge\frac1{2C_0} \right)+I\left(|\phi_k|\ge\frac1{6C_0}\right)\bigg].
\end{align*}
By Lemma \ref{le6}, we have for $l\ge1$ and $t>(49-89\eta)l/(95\eta)$
\begin{align*}
&{\rm P}\left(\max_{u\in S_n'}\left|v_n\frac1{p}\sum_{k=1}^n\left(\re_{k-1}-\re_k\right)\rho_k\hat\rho_k\phi_k\rtr\left(\bd_{nk}^{-2}\bS_p\right) G_k\right|>\varepsilon\right)\\
\le&n^2\re\left|\sum_{k=1}^n\left(\re_{k-1}-\re_k\right)v_n\frac1{p}\rho_k\hat\rho_k\phi_k\rtr\left(\bd_{nk}^{-2}\bS_p\right) G_k\right|^{2l}\\
\le&C_ln^2\left[\re\left(\sum_{k=1}^n\re_{k-1}\left|v_n\frac1{p}\rho_k\hat\rho_k\phi_k\rtr\left(\bd_{nk}^{-2}\bS_p\right) G_k\right|^{2}\right)^l+
\sum_{k=1}^n\re\left|v_n\frac1{p}\rho_k\hat\rho_k\phi_k\rtr\left(\bd_{nk}^{-2}\bS_p\right) G_k\right|^{2l}\right]\\
\le&C_ln^2\Bigg[\re\Big(v_n^{2}\sum_{k=1}^n\re_{k-1} G_k\left|\frac1{p}\rtr\left(\bd_{nk}^{-2}\bS_p\right)\right|^2\frac{\rtr\left(\bd_{nk}^{-1}\bS_p(\bd_{nk}^{-1})^*\bS_p\right)}{n^2}\\
&+v_n^{-2}\sum_{k=1}^n\re_{k-1}\left|\phi_k\right|^2\bigg[I\left(|\hat\phi_k|\ge\frac1{2C_0} \right)+I\left(|\phi_k|\ge\frac1{6C_0}\right)\bigg]\Big)^l+v_n^{-6l}
\sum_{k=1}^n\re\left|\phi_k\right|^{2l}\Bigg]\\
\le&C_ln^2\Bigg[\re\left(v_n^{2}\sum_{k=1}^n\re_{k-1} G_k\left|\frac1{p}\rtr\left(\bd_{nk}^{-2}\bS_p\right)\right|^2\frac{\rtr\left(\bd_{nk}^{-1}(\bd_{nk}^{-1})^*\right)}{n^2}\right)^l\\
&+v_n^{-2l}n^{l-1}\sum_{k=1}^n\re_{k-1}\left|\phi_k\right|^{2l}\bigg[I\left(|\hat\phi_k|\ge\frac1{2C_0} \right)+I\left(|\phi_k|\ge\frac1{6C_0}\right)\bigg]+v_n^{-6l}
\sum_{k=1}^n\re\left|\phi_k\right|^{2l}\Bigg]\\
\le&C_ln^2\Bigg[\re\left(\frac1{p}v_n^{2}\sum_{k=1}^n\re_{k-1} G_k\rtr\left(\bd_{nk}^{-2}\bS_p(\bd_{nk}^{-2})^*\bS_p\right)\frac{\rtr\left(\bd_{nk}^{-1}(\bd_{nk}^{-1})^*\right)}{n^2}\right)^l\\
&+v_n^{-2l}n^{l-1}\sum_{k=1}^n\re_{k-1}\left|\phi_k\right|^{2l}\bigg[|\hat\phi_k|^{2t}+|\phi_k|^{2t}\bigg]+v_n^{-6l}n
n^{-2\eta l}v_n^{-2l}\Bigg]\\
\le&C_ln^2\Bigg[\re\left(\frac1{p}v_n^{2}\sum_{k=1}^n\re_{k-1} G_k\rtr\left(\bd_{nk}^{-2}(\bd_{nk}^{-2})^*\right)\frac{\rtr\left(\bd_{nk}^{-1}(\bd_{nk}^{-1})^*\right)}{n^2}\right)^l+v_n^{-2l}n^{l-1}n
n^{-2\eta l-2\eta t}v_n^{-2l-2t}\\
&+v_n^{2l}n^{1-83\eta l/49}\Bigg]\\
\le&C_ln^2\Bigg[\re\left(\frac1{n^2 p}v_n^{2}\sum_{k=1}^n\re_{k-1} G_k\rtr\left(\bd_{nk}^{-2}(\bd_{nk}^{-2})^*\right){\rtr\left(\bd_{nk}^{-1}(\bd_{nk}^{-1})^*\right)}\right)^l+v_n^{2l}n^{-89\eta l/49-95\eta t/49+l}\\
&+v_n^{2l}n^{1-83\eta l/49}\Bigg]\\
\le&C_ln^2\left[\re\left(\frac1{n^2 p}v_n^{2}\sum_{k=1}^n\re_{k-1} G_k\rtr\left(\bd_{nk}^{-2}(\bd_{nk}^{-2})^*\right){\rtr\left(\bd_{nk}^{-1}(\bd_{nk}^{-1})^*\right)}\right)^l+v_n^{2l}\right].
\end{align*}
Using (\ref{al15}), one gets
\begin{align*}
&\sum_{k=1}^n\re_{k-1} G_k\rtr\left(\bd_{nk}^{-2}(\bd_{nk}^{-2})^*\right){\rtr\left(\bd_{nk}^{-1}(\bd_{nk}^{-1})^*\right)}\\
\le&\sum_{k=1}^n\re_{k-1} G_k\sum_{j=1}^p\frac1{\left((\lambda_{kj}-u)^2+v_n^2\right)^2}\sum_{l=1}^p\frac1{(\lambda_{kl}-u)^2+v_n^2}\\
\le&\sum_{k=1}^n\re_{k-1}G_k\left[p\underline\varepsilon^{-4}
+pv_n^{-4}F_{nk}([a',b'])\right]\left[p\underline\varepsilon^{-2}
+pv_n^{-2}F_{nk}([a',b'])\right]\\
\le& Cn^{-3}.
\end{align*}
Thus, for $l\ge1$
\begin{align*}
&{\rm P}\left(\max_{u\in S_n'}\left|v_n\frac1{p}\sum_{k=1}^n\left(\re_{k-1}-\re_k\right)\rho_k\hat\rho_k\phi_k\rtr\left(\bd_{nk}^{-2}\bS_p\right) G_k\right|>\varepsilon\right)
\le C_ln^{2-3\eta l/49}
\end{align*}
which is summable when $l>49/\eta$. Therefore,
\begin{align}\label{al20}
\max_{u\in S_n'}|W_2|=o_{{\rm a.s.}}(1/(nv_n)).
\end{align}
Consequently, (\ref{al18}) follows from (\ref{al19}) and (\ref{al20}).

\subsubsection{Convergence of the Expected Value}
In this step, we are going to show that for $z=u+iv_n=u+in^{-3\eta/98}$,
\begin{align*}
\sup_{u\in[a,b]}|\re\underline m_n(z)-\underline m_n^0(z)|=O(n^{-1}).
\end{align*}

As in \ref{se2}, let
\begin{align*}
w_n'=-\frac1z\int\frac1{1+t\re\underline m_n(z)}dH_n(t)-\re m_n(z)
\end{align*}
and
\begin{align*}
R_n'=-z-\frac1{\re\underline m_n(z)}+c_n\int\frac1{1+t\re\underline m_n(z)}dH_n(t).
\end{align*}
Then $R_n'=w_n'zc_n/\re\underline m_n(z)$ and
\begin{align*}
{\re\underline m_n(z)}=\frac1{-z-R_n'+c_n\int\frac1{1+t\re\underline m_n(z)}dH_n(t)}.
\end{align*}
Together with (\ref{al6}), one finds
\begin{align*}
\re{\underline m_n(z)}-&{\underline m_n^0(z)}=\frac{1}{{c_n}\int\frac t{\re\underline m_n(z)t+1}dH_n(t)-z-R_n'}-\frac{1}{{c_n}\int\frac t{\underline m_n^0(z)t+1}dH_n(t)-z}\\
=&\frac{{c_n}\left({\re\underline m_n(z)}-{\underline m_n^0(z)}\right)\int\frac {t^2}{\left(\re\underline m_n(z)t+1\right)\left(\underline m_n^0(z)t+1\right)}dH_n(t)}{\left({c_n}\int\frac t{\re\underline m_n(z)t+1}dH_n(t)-z-R_n'\right)\left({c_n}\int\frac t{\underline m_n^0(z)t+1}dH_n(t)-z\right)}+{\re\underline m_n(z)}{\underline m_n^0(z)}R_n'.
\end{align*}
Let $\underline m_2^0(z)=\Im \underline m_n^0(z)$. In \cite{Bai1998No}, it has been shown that $\sup_{u\in[a,b]}|\underline m_n^0(z)|$ is bounded in $n$ and
\begin{align*}
\sup_{u\in[a,b]}\frac{m_2^0c_n\int\frac{t^2dH_n(t)}{|1+t\underline m_n^0(z)|^2}}{v_n+{m_2^0c_n\int\frac{t^2dH_n(t)}{|1+t\underline m_n^0(z)|^2}}}
\end{align*}
is bounded away from 1 for all $n$. Therefore, if $|R_n'|\le Cn^{-1}$ is true, we shall get, for all $n$ sufficiently large,
\begin{align*}
\sup_{u\in[a,b]}|\re{\underline m_n(z)}-&{\underline m_n^0(z)}|\le C|{\re\underline m_n(z)}{\underline m_n^0(z)}R_n'|\le Cn^{-1}
\end{align*}
where the last inequality is from $(\ref{al13})$ in \ref{se2}.

Now, we are in position to show $|R_n'|\le Cn^{-1}$. Write
\begin{align*}
\bd_n-\left(-z\re\underline m_n(z)\bS_p-z\bi_p\right)=\sum_{k=1}^n\br_k\br_k^*-\left(-z\re\underline m_n(z)\bS_p\right).
\end{align*}
Taking first inverses and then the expected value, we get from (\ref{al4}) and (\ref{al5}) in \ref{se2}
\begin{align*}
&\left(-z\re\underline m_n(z)\bS_p-z\bi_p\right)^{-1}-\re\bd_n^{-1}\\
=&\left(-z\re\underline m_n(z)\bS_p-z\bi_p\right)^{-1}\re\left[\left(\sum_{k=1}^n\br_k\br_k^*-\left(-z\re\underline m_n(z)\bS_p\right)\right)\bd_n^{-1}\right]\\
=&-z^{-1}\left(\re\underline m_n(z)\bS_p+\bi_p\right)^{-1}\re\left(\sum_{k=1}^n\rho_k\br_k\br_k^*\bd_{nk}^{-1}-(\re\rho_1)\bS_p\bd_n^{-1}\right)\\
=&-z^{-1}\left(\re\underline m_n(z)\bS_p+\bi_p\right)^{-1}\sum_{k=1}^n\re\rho_k\left(\br_k\br_k^*\bd_{nk}^{-1}-\frac1n\bS_p\re\bd_n^{-1}\right)\\
=&-nz^{-1}\re\rho_1\left[\left(\re\underline m_n(z)\bS_p+\bi_p\right)^{-1}\br_1\br_1^*\bd_{n1}^{-1}-\frac1n\left(\re\underline m_n(z)\bS_p+\bi_p\right)^{-1}\bS_p\re\bd_n^{-1}\right].
\end{align*}
Taking the trace on both sides and dividing by $-n/z$, we obtain
\begin{align}\label{al30}
&-c_nzw_n'=c_nz\re m_n(z)+c_n\int\frac1{1+t\re\underline m_n(z)}dH_n(t)\\
=&\re\rho_1\left[\br_1^*\bd_{n1}^{-1}\left(\re\underline m_n(z)\bS_p+\bi_p\right)^{-1}\br_1-\frac1n\rtr\left(\re\underline m_n(z)\bS_p+\bi_p\right)^{-1}\bS_p\re\bd_n^{-1}\right]\notag\\
=&\re\rho_1\left[\br_1^*\bd_{n1}^{-1}\left(\re\underline m_n(z)\bS_p+\bi_p\right)^{-1}\br_1-\frac1n\rtr\left(\re\underline m_n(z)\bS_p+\bi_p\right)^{-1}\bS_p\bd_{n1}^{-1}\right]\notag\\
&+\frac1n\re\rho_1\left[\rtr\left(\re\underline m_n(z)\bS_p+\bi_p\right)^{-1}\bS_p\bd_{n1}^{-1}-\re\rtr\left(\re\underline m_n(z)\bS_p+\bi_p\right)^{-1}\bS_p\bd_{n1}^{-1}\right]\notag\\
&+\frac1n\re\rho_1\left[\re\rtr\left(\re\underline m_n(z)\bS_p+\bi_p\right)^{-1}\bS_p\bd_{n1}^{-1}-\re\rtr\left(\re\underline m_n(z)\bS_p+\bi_p\right)^{-1}\bS_p\bd_n^{-1}\right]\notag\\
\triangleq&T_1+T_2+T_3.\notag
\end{align}

The remaining task is showing the uniformly bound of $c_nzw_n'$ for $u\in[a,b]$. In last section, we have shown that $\sup_{u\in[a,b]}|\re\rho_1|$ and $\sup_{u\in[a,b]}|b_n|$ are bounded. Similarly, we shall show that $\sup_{u\in[a,b]}|\re\rho_{1j}|$ and $\sup_{u\in[a,b]}|b_{1n}|$ are also bounded. From (\ref{al13}) in \ref{se2} and the fact $-1/m_n^0(z)$ stays uniformly away from the eigenvalues of $\bS_p$ for all $u\in[a,b]$, it follows that
\begin{align}\label{al21}
\sup_{u\in[a,b]}\left\|\left(\re\underline m_n(z)\bS_p+\bi_p\right)^{-1}\right\|\le C.
\end{align}

By (\ref{al21}) and the relationship
\begin{align}\label{al29}
\rho_1=b_n-b_n^2\hat\phi_1+b_n^2\rho_1\hat\phi_1^2,
\end{align}
 we get
\begin{align*}
\sup_{u\in[a,b]}|T_1|=&\sup_{u\in[a,b]}|b_n|^2\Bigg|\re\left(\hat\phi_1-\rho_1\hat\phi_1^2\right)\Bigg[\br_1^*\bd_{n1}^{-1}\left(\re\underline m_n(z)\bS_p+\bi_p\right)^{-1}\br_1\\
&\qquad\qquad\qquad-\frac1n\rtr\left(\re\underline m_n(z)\bS_p+\bi_p\right)^{-1}\bS_p\bd_{n1}^{-1}\Bigg]\Bigg|\\
\le&\sup_{u\in[a,b]}\left(\re|\hat\phi_1|^2+\frac{|z|^2}{v_n^2}\re|\hat\phi_1|^4\right)^{1/2}
\Bigg[\re\Bigg|\br_1^*\bd_{n1}^{-1}\left(\re\underline m_n(z)\bS_p+\bi_p\right)^{-1}\br_1\\
&\qquad\qquad\qquad-\frac1n\rtr\left(\re\underline m_n(z)\bS_p+\bi_p\right)^{-1}\bS_p\bd_{n1}^{-1}\Bigg|^2\Bigg]^{1/2}\\
\le&\frac Cn\sup_{u\in[a,b]}\left(\re|\hat\phi_1|^2+\frac{|z|^2}{v_n^2}\re|\hat\phi_1|^4\right)^{1/2}
\Bigg[\re\rtr\bd_{n1}^{-1}\left(\bd_{n1}^*\right)^{-1}\Bigg]^{1/2}.
\end{align*}
Using (\ref{al15}), we have for $k\ge1$ and $l=1,2$,
\begin{align}\label{al24}
&\sup_{u\in[a,b]}\re\left[\rtr\bd_{n1}^{-l}\left(\bd_{n1}^*\right)^{-l}\right]^k=\sup_{u\in[a,b]}\re\left[\sum_{j=1}^p\left(\frac1{(\lambda_{1j}-u)^2+v_n^2}\right)^l\right]^k\\
=&\sup_{u\in[a,b]}\re\left[\sum_{j\notin[a',b']}\left(\frac1{(\lambda_{1j}-u)^2+v_n^2}\right)^l+\sum_{j\in[a',b']}\left(\frac1{(\lambda_{1j}-u)^2+v_n^2}\right)^l\right]^k\notag\\
\le&\sup_{u\in[a,b]}\re\left[p\underline\varepsilon^{-2l}+pv_n^{-2l}F_{n1}([a',b'])\right]^k\le Cn^k.\notag
\end{align}
Likewise, it can be verified that for $k\ge1$ and $l=1,2$,
\begin{align}\label{al22}
&\sup_{u\in[a,b]}\re\left[\rtr\bd_{n12}^{-l}\left(\bd_{n12}^*\right)^{-l}\right]^k\le Cn^k.
\end{align}
By (\ref{al4}), (\ref{al23}), (\ref{al22}), Corollary \ref{co1}, and $\rho_{1j}=b_{1n}-\rho_{1j}b_{1n}\hat\phi_{1j}$, we have for any nonrandom $p\times p$ matrix $\ba$ with bounded norm,
\begin{align}\label{al28}
&\sup_{u\in[a,b]}\re\left|\rtr\ba\bd_{n1}^{-1}-\re\rtr\ba\bd_{n1}^{-1}\right|^2=\sup_{u\in[a,b]}\sum_{j=2}^n\re\left|(\re_j-\re_{j-1})\rtr\ba\bd_{n1}^{-1}\right|^2\\
\le&2\sup_{u\in[a,b]}\sum_{j=2}^n\re\left|\rho_{1j}\br_j^*\bd_{n1j}^{-1}\ba\bd_{n1j}^{-1}\br_j\right|^2\notag\\
=&2(n-1)\sup_{u\in[a,b]}\re\left|\left(b_{1n}-\rho_{12}b_{1n}\hat\phi_{12}\right)\br_2^*\bd_{n12}^{-1}\ba\bd_{n12}^{-1}\br_2\right|^2\notag\\
\le&Cn\sup_{u\in[a,b]}\left[\re\left|\br_2^*\bd_{n12}^{-1}\ba\bd_{n12}^{-1}\br_2\right|^2
+v_n^{-2}\left(\re\left|\hat\phi_{12}\right|^4\re\left|\br_2^*\bd_{n12}^{-1}\ba\bd_{n12}^{-1}\br_2\right|^4\right)^{1/2}\right]\notag\\
\le&Cn\sup_{u\in[a,b]}\left[\re\left|\br_2^*\bd_{n12}^{-1}(\bd_{n12}^*)^{-1}\br_2\right|^2
+n^{-2\eta}v_n^{-4}\left(\re\left|\br_2^*\bd_{n12}^{-1}(\bd_{n12}^*)^{-1}\br_2\right|^4\right)^{1/2}\right]\notag\\
\le&Cn\sup_{u\in[a,b]}\Bigg[n^{-2}\left(\rtr\bd_{n12}^{-1}(\bd_{n12}^*)^{-1}\right)^2+n^{-2}n\left\|\bd_{n12}^{-1}(\bd_{n12}^*)^{-1}\right\|^2\notag\\
&+n^{-2\eta}v_n^{-4}\left(n^{-4}\left(\rtr\bd_{n12}^{-1}(\bd_{n12}^*)^{-1}\right)^4+n^{-4}n^{2}\left\|\bd_{n12}^{-1}(\bd_{n12}^*)^{-1}\right\|^4\right)^{1/2}\Bigg]\notag\\
\le&Cn\sup_{u\in[a,b]}\Bigg[1+n^{-1}v_n^{-4}+n^{-2\eta}v_n^{-4}\left(1+n^{-2}v_n^{-8}\right)^{1/2}\Bigg]\le Cn.\notag
\end{align}
From the above inequality, one obtains
\begin{align*}
&\re\left|\hat\phi_1-\phi_1\right|^{2}
=n^{-2}\re\left|\rtr\left(\bS_p\bd_{n1}^{-1}\right)-\re\rtr\left(\bS_p\bd_{n1}^{-1}\right)\right|^{2}
\le Cn^{-1}.
\end{align*}
Together with
\begin{align*}
\sup_{u\in[a,b]}\re|\phi_1|^2\le\sup_{u\in[a,b]}Cn^{-2}\re\rtr\bd_{n1}^{-1}(\bd_{n1}^*)^{-1}\le Cn^{-1},
\end{align*}
we get
\begin{align}\label{al25}
\sup_{u\in[a,b]}\re|\hat\phi_1|^2\le2\sup_{u\in[a,b]}\re|\hat\phi_1-\phi_1|^2+2\sup_{u\in[a,b]}\re|\phi_1|^2\le Cn^{-1}.
\end{align}
Applying Corollary \ref{co1} and (\ref{al24}), it implies
\begin{align*}
\sup_{u\in[a,b]}\re|\phi_1|^4\le Cn^{-4}\sup_{u\in[a,b]}n^{2}\re\left\|\bd_{n1}^{-1}\right\|^2\le Cn^{-2}v_n^{-2}.
\end{align*}
Combining (\ref{al26}) and the above inequality, we get
\begin{align}\label{al27}
\sup_{u\in[a,b]}\re|\hat\phi_1|^4\le8\sup_{u\in[a,b]}\re|\hat\phi_1-\phi_1|^4+8\sup_{u\in[a,b]}\re|\phi_1|^4\le Cn^{-2}v_n^{-4}.
\end{align}
From (\ref{al24}), (\ref{al25}), and (\ref{al27}), we conclude that
\begin{align*}
\sup_{u\in[a,b]}|T_1|
\le&\frac Cn\sup_{u\in[a,b]}\left(n^{-1}+\frac{|z|^2}{v_n^2}n^{-2}v_n^{-4}\right)^{1/2}n^{1/2}\le Cn^{-1}.
\end{align*}

By  (\ref{al29}) and (\ref{al28})-(\ref{al27}), it follows
\begin{align*}
\sup_{u\in[a,b]}|T_2|
=&\sup_{u\in[a,b]}\Bigg|\frac1n\re(b_n^2\hat\phi_1-b_n^2\rho_1\hat\phi_1^2)\Big[\rtr\left(\re\underline m_n(z)\bS_p+\bi_p\right)^{-1}\bS_p\bd_{n1}^{-1}\\
&-\re\rtr\left(\re\underline m_n(z)\bS_p+\bi_p\right)^{-1}\bS_p\bd_{n1}^{-1}\Big]\Bigg|\\
\le&\sup_{u\in[a,b]}\frac Cn\left(\re\left|\rtr\left(\re\underline m_n(z)\bS_p+\bi_p\right)^{-1}\bS_p\bd_{n1}^{-1}-\re\rtr\left(\re\underline m_n(z)\bS_p+\bi_p\right)^{-1}\bS_p\bd_{n1}^{-1}\right|^2\right)^{1/2}\\
&\times\left(\re|\hat\phi_1|^2+v_n^{-2}\re|\hat\phi_1|^4\right)^{1/2}\\
\le&\frac C{ n}\sqrt n\left(n^{-1}+n^{-2}v_n^{-6}\right)^{1/2}\le Cn.
\end{align*}

Using (\ref{al4}), (\ref{al24}), (\ref{al25}), Corollary \ref{co1}, and $\rho_1=b_n-\rho_1b_n\hat\phi_1$
\begin{align*}
\sup_{u\in[a,b]}|T_3|
\le&\frac Cn\sup_{u\in[a,b]}\Bigg|\re\rtr\left(\re\underline m_n(z)\bS_p+\bi_p\right)^{-1}\bS_p\bd_{n1}^{-1}-\re\rtr\left(\re\underline m_n(z)\bS_p+\bi_p\right)^{-1}\bS_p\bd_n^{-1}\Bigg|\\
=&\frac Cn\sup_{u\in[a,b]}\Bigg|\re\rho_1\br_1^*\bd_{n1}^{-1}\left(\re\underline m_n(z)\bS_p+\bi_p\right)^{-1}\bS_p\bd_{n1}^{-1}\br_1\Bigg|\\
=&\frac Cn\sup_{u\in[a,b]}\Bigg|\re(b_n-\rho_1b_n\hat\phi_1)\br_1^*\bd_{n1}^{-1}\left(\re\underline m_n(z)\bS_p+\bi_p\right)^{-1}\bS_p\bd_{n1}^{-1}\br_1\Bigg|\\
\le&\frac C{n^2}\sup_{u\in[a,b]}\Bigg|\re\rtr\bd_{n1}^{-1}\left(\re\underline m_n(z)\bS_p+\bi_p\right)^{-1}\bS_p\bd_{n1}^{-1}\bS_p\Bigg|\\
&+\frac Cn\sup_{u\in[a,b]}v_n^{-1}\re^{1/2}|\hat\phi_1|^2\re^{1/2}\left|\br_1^*\bd_{n1}^{-1}\left(\re\underline m_n(z)\bS_p+\bi_p\right)^{-1}\bS_p\bd_{n1}^{-1}\br_1\right|^2\\
\le&\frac C{n^2}\sup_{u\in[a,b]}\left|\re\rtr\bd_{n1}^{-1}(\bd_{n1}^*)^{-1}\right|+\frac C{ n^{3/2}}\sup_{u\in[a,b]}v_n^{-1}\re^{1/2}\left|\br_1^*\bd_{n1}^{-1}(\bd_{n1}^*)^{-1}\br_1\right|^2\\
\le&\frac C{n}+\frac C{ n^{3/2}}\sup_{u\in[a,b]}v_n^{-1}\left[\frac{n\re\left\|\bd_{n1}^{-1}(\bd_{n1}^*)^{-1}\right\|^2}{n^2}\right]^{1/2}\\
\le&\frac C{n}+\frac C{ n^{3/2}}v_n^{-1}\le Cn^{-1}.
\end{align*}

From the above three inequalities and (\ref{al30}), we have
\begin{align*}
\sup_{u\in[a,b]}|w_n'|\le Cn^{-1}.
\end{align*}
From $(\ref{al13}$), we see that $\re\underline m_n(z)$ must be uniformly bounded away from $0$ for all $u\in[a,b]$ and all $n$. Therefore,
\begin{align*}
\sup_{u\in[a,b]}|R_n'|=\sup_{u\in[a,b]}\left|w_n'zc_n/\re\underline m_n(z)\right|\le Cn^{-1}.
\end{align*}

\subsubsection{Completing the proof of theorem \ref{tms}}

The results presented in the last two subsections implies that for $z=u+iv_n=u+in^{-3\eta/98}$,
\begin{align*}
\sup_{u\in[a,b]}|\underline m_n(z)-\underline m_n^0(z)|=o((nv_n)^{-1}) a.s..
\end{align*}
Following
the same arguments as \cite{Bai1998No}, one can easily prove that
\begin{align*}
\sup_{u\in[a,b]}|\int\frac{I_{[a,b]^c}d(F^{\underline{\mb S_n}(\lambda)}-F^{c_n,H_n(\lambda)})}{\(\(u-\lambda_j\)^2+v_n^2\)\(\(u-\lambda_j\)^2+2v_n^2\)\cdots\(\(u-\lambda_j\)^2+C v_n^2\)}+\\ \sum_{\lambda_{j}\in[a',b']}\frac{v_n^{{2C}}}{\(\(u-\lambda_j\)^2+v_n^2\)\(\(u-\lambda_j\)^2+2v_n^2\)\cdots\(\(u-\lambda_j\)^2+C v_n^2\)}|=o(1),\ a.s.
\end{align*}
where $\lambda_j'$s denotes the eigenvalues of $\underline{\mb S_n}$ and $C$ is a constant (even number) determined by $\eta$. From this, combining with the fact that the integral
converges a.s. to 0, one can argue that, with probability one, no eigenvalue of $\mb S_n$ appears in $[a,b]$
for all sufficiently large $n$.

The proof of this theorem is complete by the argument above and subsection \ref{truncation}.
\subsection{Proof of theorem \ref{tmt}}
When $\mb \Sigma_p=\mb I_p$, by theorem \ref{tmf}, we have
$$z=-\frac{1}{\underline m(z)}+\frac{c}{1+\underline m(z)},$$
which implies
$$\underline m(z)=\frac{-(z+1-c)+\sqrt{(z-1-y)^2-4y}}{2z}.$$
Then the LSD of $\mb S_n$ is the standard M-P law, which is supported on $[(1-\sqrt{c})^2,(1+\sqrt{c})^2]$. The proof of this theorem is complete by combining the result of theorem \ref{tms}.

\section{List of Auxiliary lemmas}

\subsection{A key lemma that need to prove}

\begin{lemma}\label{le3}
Let $\ba=(a_{jk})$ be a $p\times p$ nonrandom matrix and $\mb x=(x_1,\cdots,x_m)'$ be a random vector of independent entries. Assume that $\re x_j=0$, $\re|x_j|^2=1$,  $\sup_{j}\re|x_j|^{6+\delta}\le M$, and $|x_j|\le n^{1/2-\eta}/b_j$, $p/n\to c\in(0,\infty)$. Here $b_j$ is defined in Section 2. Then for any $l\ge1$, as $n\to \infty$
\begin{align*}
\re|\mb x^*\bb_n^*\ba\bb_n\mb x-\rtr\ba\bS_p|^l\le C_l n^{(1-\eta)l}\|\ba\|^l
\end{align*}
where $C_l$ is a constant depending on $l$ only and ${\bf \Sigma_p}=\bb_n\bb_n^*$.
\end{lemma}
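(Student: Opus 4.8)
The plan is to reduce the whole statement to one estimate for the centred quadratic form $Q:=\mb x^*\mb M\mb x-\rtr\mb M$ with $\mb M=\bb_n^*\ba\bb_n$, an $m\times m$ matrix whose entries are nonetheless controlled by the column norms $b_j$. First I would record the elementary facts: $\rtr\mb M=\rtr(\ba\bS_p)$; $\|\mb M\|\le\|\ba\|\,\|\bS_p\|\le C\|\ba\|$; writing $\mb b_{\cdot j}$ for the $j$-th column of $\bb_n$, so that $b_j=\|\mb b_{\cdot j}\|$, one has $|m_{jk}|=|\mb b_{\cdot j}^*\ba\mb b_{\cdot k}|\le\|\ba\|b_jb_k$; and any trace of a product of $t$ factors taken from $\{\mb M,\mb M^*\}$ equals $\rtr(\bS_p\ba\bS_p\ba^*\cdots)$ and is thus at most $C^{\,t}\|\ba\|^{t}\rtr\bS_p\le C^{\,t}\|\ba\|^{t}n$, since $\rtr\bS_p=\sum_jb_j^2\le C_2p\le Cn$. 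By Lyapunov's inequality it suffices to prove the bound for an even integer $l=2q$, $q\ge1$.

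Next I would split $Q=Q_1+Q_2$, with the diagonal part $Q_1=\sum_j m_{jj}(|x_j|^2-1)$ and the off-diagonal part $Q_2=\sum_{j\ne k}\bar x_jm_{jk}x_k$, and estimate $\re|Q_1|^{2q}$ and $\re|Q_2|^{2q}$ separately. For $Q_1$, Rosenthal's inequality for sums of independent centred variables bounds it by $C_q\bigl[(\sum_j|m_{jj}|^2\,\re\bigl||x_j|^2-1\bigr|^2)^q+\sum_j|m_{jj}|^{2q}\,\re\bigl||x_j|^2-1\bigr|^{2q}\bigr]$. Moments are handled by the hypotheses: for $2r\le6+\delta$ Lyapunov gives $\re|x_j|^{2r}\le M^{2r/(6+\delta)}$, whereas for $2r>6+\delta$ the truncation $|x_j|\le n^{1/2-\eta}/b_j$ gives $\re|x_j|^{2r}\le M(n^{1/2-\eta}/b_j)^{2r-6-\delta}$; feeding these, together with $|m_{jj}|\le\|\ba\|b_j^2$, $\sup_jb_j\le C_1$, and the two summations $\sum_jb_j^2\le Cn$ and $\sum_jb_j^{6+\delta}\le C_1^{4+\delta}\sum_jb_j^2\le Cn$, into the two Rosenthal terms yields, after a routine computation using $\eta<1/2$, the bound $\re|Q_1|^{2q}\le C_q\|\ba\|^{2q}n^{2q(1-\eta)}$. (Its $l=2$ case is the estimate $\re|\mb x^*\bb_n^*\ba\bb_n\mb x-\rtr\ba\bS_p|^2=o(n^2)$ already needed for Theorem~\ref{tmf}.)

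For $Q_2$ I would expand $\re|Q_2|^{2q}=\re\bigl(Q_2^q\,\overline{Q_2}^{\,q}\bigr)$ into a sum over $2q$ ordered index-pairs and use independence and $\re x_j=0$: only terms in which every index occurs at least twice survive, each such term being a product of entries of $\mb M,\mb M^*$ times a product $\prod_j\re[x_j^{a_j}\bar x_j^{b_j}]$ whose modulus is at most $\prod_j\re|x_j|^{a_j+b_j}$. Grouping the surviving terms by the multigraph they induce, I would first sum out every index of multiplicity exactly $2$: since $\sum_\ell m_{a\ell}m_{\ell b}=(\mb M^2)_{ab}$ (and similarly with conjugates), this merges maximal chains of such indices into single entries of products of $\mb M,\mb M^*$, producing either a closed trace, worth $\le Cn\cdot C^{(\mathrm{length})}\|\ba\|^{(\mathrm{length})}$, or an entry joining two indices of multiplicity $\ge3$, worth $\le C^{(\mathrm{length})}\|\ba\|^{(\mathrm{length})}b_ab_b$. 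An index $j$ of multiplicity $r_j\ge3$ then carries a factor $b_j^{\,r_j}\re|x_j|^{r_j}$, and by the truncation $\sum_jb_j^{\,r_j}\re|x_j|^{r_j}\le Cn$ if $r_j\le6+\delta$, $\le Cn^{\,1+(1/2-\eta)(r_j-6-\delta)}$ if $r_j>6+\delta$. Writing $L$ for the number of closed traces produced, $D_{\mathrm{sp}}$ for the number of multiplicity-$\ge3$ indices, and $H=\sum_{r_j>6+\delta}(r_j-6-\delta)$, the chains have total length $2q$ (so the power of $\|\ba\|$ is exactly $\|\ba\|^{2q}$, as claimed) and the power of $n$ is $L+D_{\mathrm{sp}}+(1/2-\eta)H$; the constraints $2L\le D_2$ and $2D_2+\sum_{\mathrm{sp}}r_j=4q$ together with $\sum_{\mathrm{sp}}r_j\ge3D_{\mathrm{sp}}+H$ (where $D_2$ is the number of multiplicity-$2$ indices) then force this power to be at most $2q(1-2\eta)\le2q(1-\eta)$. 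Summing over the finitely many multigraph types gives $\re|Q_2|^{2q}\le C_q\|\ba\|^{2q}n^{2q(1-\eta)}$, and adding this to the bound on $\re|Q_1|^{2q}$ finishes the proof.

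The step I expect to be the main obstacle is precisely this exponent bookkeeping for $Q_2$. Since $m$ is arbitrary and the variables are only truncated at the growing level $n^{1/2-\eta}$, one cannot invoke the classical quadratic-form moment inequality (of Bai--Silverstein type) with a fixed moment bound, and one must verify that the ``excess'' created by each index of high multiplicity --- where the growing truncation level, not a fixed moment, has to be used --- is compensated by the loss of a freely summable index, uniformly over the multigraph structure and over $q$. The inequality that closes it is $\tfrac14D_{\mathrm{sp}}+(\tfrac14-\eta)H\le q(1-2\eta)$, which follows from $D_{\mathrm{sp}}\le(4q-H)/3$ and $H\le4q$ whenever $\eta<1/6$; the model's $\eta<1/42$ leaves ample room.
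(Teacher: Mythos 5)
Your proposal is correct in substance, and for the harder (off-diagonal) half it takes a genuinely different route from the paper. Both proofs start identically: split $\mb x^*\bh\mb x-\rtr\bh$ (with $\bh=\bb_n^*\ba\bb_n$) into the diagonal part $\sum_jh_{jj}(|x_j|^2-1)$ and the off-diagonal part, use $|h_{jk}|\le\|\ba\|b_jb_k$, and exploit the same interplay between the $j$-dependent truncation level $n^{1/2-\eta}/b_j$ and the summability $\sum_jb_j^2\le Cn$, $\sum_jb_j^{6+\delta}\le Cn$ (your Rosenthal treatment of the diagonal part is essentially the paper's, via Lemma \ref{le6} and Lemma \ref{le5}). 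Where you diverge is the off-diagonal part: the paper orders the indices, treats $\sum_j\bigl(\sum_{k<j}h_{kj}\bar x_k\bigr)x_j$ as a martingale difference array, applies Burkholder--Rosenthal, and closes an induction over dyadic ranges $2^\theta<l\le2^{\theta+1}$ in which the term $\re(\mb x^*\bh\bh^*\mb x)^{l/2}$ is absorbed by the induction hypothesis with $\bh$ replaced by $\bh\bh^*$. You instead expand $\re|Q_2|^{2q}$ directly and classify terms by multigraphs; your exponent bookkeeping ($L+D_{\mathrm{sp}}+(\tfrac12-\eta)H\le 2q(1-2\eta)$ from $2L\le D_2$, $2D_2+\sum_{\mathrm{sp}}r_j=4q$, $\sum_{\mathrm{sp}}r_j\ge3D_{\mathrm{sp}}+H$) checks out, and it is in fact the honest explanation of \emph{why} the exponent $(1-\eta)l$ appears, which the induction somewhat obscures. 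The trade-off: the paper's route needs no combinatorics but leans on martingale inequalities and a slightly delicate induction; yours is elementary but must handle two routine wrinkles you do not mention explicitly --- a multiplicity-$2$ index occurring twice unbarred carries the weight $\re x_\ell^2\ne1$, so chain-merging produces entries and traces of products like $\mb M\mb D\mb M$ with $\|\mb D\|\le1$ diagonal (harmless, since $\|\bb_n\mb D\bb_n^*\|\le\|\bS_p\|$, but it must be said), and the indices in a given multigraph class are constrained to be distinct, so the unrestricted matrix-product identities require an inclusion--exclusion over coarser partitions (finitely many for fixed $q$, with Lyapunov absorbing the merged moment weights). Neither affects the exponent count, so your argument closes.
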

\begin{proof}
Let $\bh=(h_{jk})=\bb^*_n\ba\bb_n$, we have
\begin{align*}
\mb x^*\bh\mb x-\rtr\bh=\sum_{j=1}^mh_{jj}\left(|x_j|^2-1\right)+\sum_{j=1}^m\sum_{k=1}^{j-1}\left(h_{kj}\bar x_kx_j+h_{jk}\bar x_jx_k\right).
\end{align*}
At first, we deduce
\begin{align*}
|h_{jk}|=\left|\be_j'\bb^*_n\ba\bb_n\be_k\right|\le\|\ba\|\sqrt{\be_j'\bb^*_n\bb_n\be_j}\sqrt{\be_k'\bb^*_n\bb_n\be_k}=b_jb_k\|\ba\|
\end{align*}
where $\be_j$ is a vector with the $j$-th element $1$ and the remaining elements zero.

Now, assume $1<l\le 2$. By Lemma \ref{le4} and Lemma \ref{le5}, we have
\begin{align*}
&\re\left|\sum_{j=1}^mh_{jj}\left(|x_j|^2-1\right)\right|^l\le C\re\left[\sum_{j=1}^m|h_{jj}|^2\left(|x_j|^2-1\right)^2\right]^{l/2}\\
\le& C\sum_{j=1}^m|h_{jj}|^l\re\left||x_j|^2-1\right|^{l}\le C\sum_{j=1}^mb_j^{2l}\|\ba\|^{l}\le C\sum_{j=1}^mb_j^{2}\|\ba\|^l\le Cn\|\ba\|^l.
\end{align*}
Furthermore, by the Holder inequality,
\begin{align*}
&\re\left|\sum_{j=1}^m\sum_{k=1}^{j-1}\left(h_{kj}\bar x_kx_j+h_{jk}\bar x_jx_k\right)\right|^l
\le C\left[\re\left|\sum_{j=1}^m\sum_{k=1}^{j-1}\left(h_{kj}\bar x_kx_j+h_{jk}\bar x_jx_k\right)\right|^2\right]^{l/2}\\
\le&C\left[\sum_{j=1}^m\sum_{k=1}^{j-1}\left(|h_{kj}|^2+|h_{jk}|^2\right)\right]^{l/2}\le C\left[\rtr\bh\bh^*\right]^{l/2}\le Cn^{l/2}\|\ba\|^l\le Cn\|\ba\|^l.
\end{align*}
Combining the two inequalities above, we obtain for $1<l\le 2$
\begin{align}\label{al10}
\re|\mb x^*\bh\mb x-\rtr\bh\bh^*|^l\le Cn\|\ba\|^l.
\end{align}
which implies that
\begin{align*}
\re|\mb x^*\bh\mb x-\rtr\bh\bh^*|\le\re^{1/2}|\mb x^*\bh\mb x-\rtr\bh\bh^*|^2\le C\sqrt n\|\ba\|.
\end{align*}

We shall proceed with the proof of the lemma by induction on $l$. And consider the case $2<l\le4$. Using Lemma \ref{le6} and Lemma \ref{le5},
\begin{align*}
&\re\left|\sum_{j=1}^mh_{jj}\left(|x_j|^2-1\right)\right|^l\\
\le &C\left[\left(\sum_{j=1}^m|h_{jj}|^2\re\left(|x_j|^2-1\right)^2\right)^{{l/2}}+\sum_{j=1}^m|h_{jj}|^l\re\left(|x_j|^2-1\right)^l\right]\\
\le &C\left[\left(\rtr\bh\bh^*\right)^{{l/2}}+\sum_{j=1}^mb_j^{2l}\|\ba\|^l\re|x_j|^{2l}\right]\le C\left[n^{l/2}\|\ba\|^{{l}}+\sum_{j=1}^mb_j^{2l}\|\ba\|^l\frac{n^{2(1/2-\eta)}}{b_j^{2}}\re|x_j|^{2l-2}\right]\\
\le &C\left[n^{l/2}\|\ba\|^{{l}}+\sum_{j=1}^mb_j^{2}\|\ba\|^l{n^{1-2\eta }}\right]\le C\left[n^{l/2}\|\ba\|^{{l}}+n^{2-2\eta }\|\ba\|^l\right].
\end{align*}
For the same reason, with notation $\re_j(\cdot)$ for the conditional expectation given $\{x_1,\cdots,x_j\}$, we have
\begin{align*}
&\re\left|\sum_{j=1}^m\sum_{k=1}^{j-1}h_{kj}\bar x_kx_j\right|^l\\
\le &C\left[\re\left(\sum_{j=1}^m\re_{j-1}\left|\sum_{k=1}^{j-1}h_{kj}\bar x_kx_j\right|^2\right)^{l/2}+\sum_{j=1}^m\re\left|\sum_{k=1}^{j-1}h_{kj}\bar x_kx_j\right|^l\right]\\
\le &C\left[\re\left(\sum_{j=1}^m\left|\sum_{k=1}^{j-1}h_{kj}\bar x_k\right|^2\right)^{l/2}+\sum_{j=1}^m\re\left|\sum_{k=1}^{j-1}h_{kj}\bar x_k\right|^l\right]\\
\le &C\left[\re\left(\sum_{j=1}^m\left|\re_{j-1}\sum_{k=1}^{n}h_{kj}\bar x_k\right|^2\right)^{l/2}+\sum_{j=1}^m\left(\sum_{k=1}^{j-1}|h_{kj}|^2\right)^{l/2}
+\sum_{j=1}^m\sum_{k=1}^{j-1}\re\left|h_{kj}\right|^l\right]\\
\le &C\left[\re\left(\sum_{j=1}^m\left|\sum_{k=1}^{n}h_{kj}\bar x_k\right|^2\right)^{l/2}+\sum_{j=1}^m\left((\bh^*\bh)_{jj}\right)^{l/2}
+\rtr\left(\bh^*\bh\right)^{l/2}\right]\\
\le &C\left[\re\left(\bx^*\bh\bh^*\bx\right)^{l/2}+\rtr\left(\bh^*\bh\right)^{l/2}\right]\le Cn\|\ba\|^l
\end{align*}
The last inequality is from (\ref{al10}) with $\bh$ replaced by $\bh\bh^*$.
Together with the two inequalities above, we conclude for $2<l\le 4$
\begin{align*}
\re|\bx^*\bh\bx-\rtr\bh\bh^*|^l\le C\left[n^{l/2}\|\ba\|^{{l}}+n^{2-2\eta }\|\ba\|^l\right]\le Cn^{(1-\eta)l}\|\ba\|^l.
\end{align*}

In the following, consider the case $2^{\theta}<l\le2^{\theta+1}$ with $\theta\ge2$. Likewise, using Lemma \ref{le6} and Lemma \ref{le5}, we deduce
\begin{align*}
&\re\left|\sum_{j=1}^mh_{jj}\left(|x_j|^2-1\right)\right|^l\\
\le &C_l\left[\left(\sum_{j=1}^m|h_{jj}|^2\re\left(|x_j|^2-1\right)^2\right)^{{l/2}}+\sum_{j=1}^m|h_{jj}|^l\re\left(|x_j|^2-1\right)^l\right]\\
\le &C_l\left[\left(\rtr\bh\bh^*\right)^{{l/2}}+\sum_{j=1}^mb_j^{2l}\|\ba\|^l\re|x_j|^{2l}\right]\le C_l\left[n^{l/2}\|\ba\|^{{l}}+\sum_{j=1}^mb_j^{2l}\|\ba\|^l\frac{n^{(1/2-\eta)(2l-6)}}{b_j^{2l-6}}\right]\\
\le &C_l\left[n^{l/2}\|\ba\|^{{l}}+\sum_{j=1}^mb_j^{6}\|\ba\|^l{n^{l-3-2\eta l}}\right]\le C\left[n^{l/2}\|\ba\|^{{l}}+n\|\ba\|^l{n^{l-3-2\eta l}}\right]
\le C_ln^{(1-\eta)l}\|\ba\|^l.
\end{align*}
and
\begin{align*}
&\re\left|\sum_{j=1}^m\sum_{k=1}^{j-1}h_{kj}\bar x_kx_j\right|^l\\
\le &C_l\left[\re\left(\sum_{j=1}^m\re_{j-1}\left|\sum_{k=1}^{j-1}h_{kj}\bar x_kx_j\right|^2\right)^{l/2}+\sum_{j=1}^m\re\left|\sum_{k=1}^{j-1}h_{kj}\bar x_kx_j\right|^l\right]\\
\le &C_l\left[\re\left(\sum_{j=1}^m\left|\sum_{k=1}^{j-1}h_{kj}\bar x_k\right|^2\right)^{l/2}+\sum_{j=1}^m\re|x_j|^l\re\left|\sum_{k=1}^{j-1}h_{kj}\bar x_k\right|^l\right]\\
\le &C_l\left[\re\left(\sum_{j=1}^m\left|\re_{j-1}\sum_{k=1}^{n}h_{kj}\bar x_k\right|^2\right)^{l/2}+\sum_{j=1}^m\re|x_j|^l\left(\sum_{k=1}^{j-1}|h_{kj}|^2\right)^{l/2}
+\sum_{j=1}^m\re|x_j|^l\sum_{k=1}^{j-1}\left|h_{kj}\right|^l\re|x_k|^l\right]\\
\le &C_l\left[\re\left(\sum_{j=1}^m\left|\sum_{k=1}^{n}h_{kj}\bar x_k\right|^2\right)^{l/2}+\sum_{j=1}^m\re|x_j|^lb_j^l\left(\sum_{k=1}^{m}b_{k}^2\right)^{l/2}\|\ba\|^l
+\sum_{j=1}^m\re|x_j|^{l}b_j^l\sum_{k=1}^{j-1}\re|x_k|^{l}b_k^l\|\ba\|^l\right]\\
\le &C_l\left[\re\left(\sum_{j=1}^m\left|\sum_{k=1}^{n}h_{kj}\bar x_k\right|^2\right)^{l/2}+n^{l/2}\sum_{j=1}^m\re|x_j|^lb_j^l\|\ba\|^l
+\left(\sum_{j=1}^m\re|x_j|^{l}b_j^l\right)^2\|\ba\|^l\right]\\
\le &C_l\left[\re\left(\sum_{j=1}^m\left|\sum_{k=1}^{n}h_{kj}\bar x_k\right|^2\right)^{l/2}+n^{l/2}\sum_{j=1}^m\frac{n^{(1/2-\eta)(l-4)}}{b_j^{l-4}}b_j^l\|\ba\|^l
+\left(\sum_{j=1}^m\frac{n^{(1/2-\eta)(l-4)}}{b_j^{l-4}}b_j^l\right)^2\|\ba\|^l\right]\\
\le &C_l\left[\re\left(\sum_{j=1}^m\left|\sum_{k=1}^{n}h_{kj}\bar x_k\right|^2\right)^{l/2}+n^{l/2}\sum_{j=1}^m{n^{l/2-2-\eta l+4\eta}}b_j^4\|\ba\|^l
+\left(\sum_{j=1}^m{n^{l/2-2-\eta l+4\eta}}b_j^4\right)^2\|\ba\|^l\right]\\
\le &C_l\left[\re\left(\mb x^*\bh\bh^*\mb x\right)^{l/2}+{n^{l-\eta l}}\|\ba\|^l
+{n^{l-2\eta l}}\|\ba\|^l\right]\le C_l\left[\re\left(\mb x^*\bh\bh^*\mb x\right)^{l/2}+{n^{l-\eta l}}\|\ba\|^l\right].
\end{align*}
Using the induction hypothesis with $\bh$ replaced by $\bh\bh^*$, it follows that
\begin{align*}
\re\left(\mb x^*\bh\bh^*\mb x\right)^{l/2}\le &C_l\left[\re\left|\mb x^*\bh\bh^*\mb x-\rtr\bh\bh^*\right|^{l/2}+\left(\rtr\bh\bh^*\right)^{l/2}\right]\\
\le&C_l\left[n^{(1-\eta)l/2}\|\ba\|^l+n^{l/2}\|\ba\|^{l}\right]\le C_ln^{(1-\eta)l/2}\|\ba\|^l.
\end{align*}
Consequently, we get
\begin{align*}
\re|\mb x^*\bh\mb x-\rtr\bh\bh^*|^l\le Cn^{l/2}\|\ba\|^l\le Cn^{(1-\eta)l}\|\ba\|^l.
\end{align*}
\end{proof}
From the proof of the above lemma, it is straightforward to show
\begin{col}\label{co1}
Under the conditions of Lemma \ref{le3}, we have
\begin{align*}
\re|\mb x^*\bb_n^*\ba\bb_n\mb x-\rtr\ba\bS_p|^2\le Cn\|\ba\|^2.
\end{align*}
and
\begin{align*}
\re|\mb x^*\bb_n^*\ba\bb_n\mb x-\rtr\ba\bS_p|^4\le Cn^2\|\ba\|^4.
\end{align*}
\end{col}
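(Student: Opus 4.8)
\textbf{Proposed proof of Corollary \ref{co1}.} Both bounds are the $l=2$ and $l=4$ specialisations of the estimates produced inside the proof of Lemma \ref{le3}; the clean statement of that lemma, with exponent $n^{(1-\eta)l}$, is lossy for these small values of $l$, and one only has to retain the intermediate estimates rather than collapse them. So the plan is to rerun the relevant part of the proof of Lemma \ref{le3} at $l=2$ and at $l=4$. Write $\bh=(h_{jk})=\bb_n^*\ba\bb_n$, so that $\rtr\bh=\rtr\ba\bS_p$ and $|h_{jk}|\le b_jb_k\|\ba\|$, and split $\mb x^*\bh\mb x-\rtr\bh=A+B$, where $A=\sum_{j=1}^m h_{jj}(|x_j|^2-1)$ is the diagonal part and $B=\sum_{j=1}^m\sum_{k=1}^{j-1}(h_{kj}\bar x_kx_j+h_{jk}\bar x_jx_k)$ is the off-diagonal part; I would bound each of $A$ and $B$ in $L^2$ and in $L^4$ separately.

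For the first inequality: since $4<6+\delta$ one has $\re(|x_j|^2-1)^2\le\re|x_j|^4\le M^{4/(6+\delta)}$ with no appeal to the truncation, so $\re|A|^2=\sum_j|h_{jj}|^2\re(|x_j|^2-1)^2\le C\|\ba\|^2\sum_j b_j^4\le C\|\ba\|^2\sum_j b_j^2\le Cn\|\ba\|^2$, using $b_j\le C_1$ and $\sum_j b_j^2\le C_2p$. By independence and $\re x_j=0$, $\re|B|^2\le C\,\rtr\bh\bh^*=C\,\rtr(\ba\bS_p\ba^*\bS_p)\le C\|\ba\|^2\|\bS_p\|\rtr\bS_p\le Cn\|\ba\|^2$. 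Adding these gives $\re|\mb x^*\bb_n^*\ba\bb_n\mb x-\rtr\ba\bS_p|^2\le Cn\|\ba\|^2$; this is exactly the $l=2$ instance of what is established around (\ref{al10}), and no factor $n^{2-2\eta}$ appears because the truncation is not needed when $l=2$.

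For the second inequality I would run the Burkholder/Rosenthal step of the $2<l\le4$ case of Lemma \ref{le3} with $l=4$. For the diagonal part, Lemma \ref{le6} and Lemma \ref{le5} give $\re|A|^4\le C[(\sum_j|h_{jj}|^2\re(|x_j|^2-1)^2)^2+\sum_j|h_{jj}|^4\re(|x_j|^2-1)^4]$; the first term is $\le C\|\ba\|^4(\sum_j b_j^4)^2\le Cn^2\|\ba\|^4$, and for the second, $\re(|x_j|^2-1)^4\le C(\re|x_j|^8+1)$ while the truncation $|x_j|\le n^{1/2-\eta}/b_j$ with $\re|x_j|^{6+\delta}\le M$ gives $\re|x_j|^8\le(n^{1/2-\eta}/b_j)^{2-\delta}M$, so that term is $\le C\|\ba\|^4 n^{1+(1/2-\eta)(2-\delta)}=C\|\ba\|^4 n^{2-\delta/2-2\eta+\eta\delta}=o(n^2)\|\ba\|^4$ since $\eta(\delta-2)<0<\delta/2$. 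For the off-diagonal part, the same splitting reduces matters, exactly as in Lemma \ref{le3}, to controlling $\re(\mb x^*\bh\bh^*\mb x)^2$ together with trace functionals of $\bh$; since $\bh\bh^*=\bb_n^*(\ba\bS_p\ba^*)\bb_n$ is of the form handled by the first inequality (with $\|\ba\bS_p\ba^*\|\le C\|\ba\|^2$), one gets $\re(\mb x^*\bh\bh^*\mb x)^2\le C(\rtr\bh\bh^*)^2+Cn\|\ba\|^4\le Cn^2\|\ba\|^4$, and the remaining trace functionals are $O(n^2\|\ba\|^4)$ from $\rtr\bS_p\le C_2p$ and $\|\bS_p\|\le C$. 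Collecting terms gives $\re|\mb x^*\bb_n^*\ba\bb_n\mb x-\rtr\ba\bS_p|^4\le Cn^2\|\ba\|^4$.

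The whole argument is a specialisation of Lemma \ref{le3} and uses no new idea; the only slightly delicate point --- and the natural main obstacle --- is the bookkeeping of the diagonal fourth-moment remainder $\sum_j|h_{jj}|^4\re(|x_j|^2-1)^4$, where one must check that the truncation level $n^{1/2-\eta}$, together with assumption (e) and $\eta<1/2$ (in fact $\eta<1/42$), keeps its exponent strictly below $2$ so that it is absorbed into $Cn^2\|\ba\|^4$. Every other piece is visibly $O(n\|\ba\|^2)$ or $O(n^2\|\ba\|^4)$.
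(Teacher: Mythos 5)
Your proposal is correct and is exactly the argument the paper intends: the paper proves the corollary only by remarking that it is "straightforward from the proof" of Lemma \ref{le3}, and what it means is precisely your specialisation — the $1<l\le2$ case of that proof already yields the bound $Cn\|\ba\|^l$ (giving the first inequality at $l=2$ with no use of the truncation), while the $2<l\le4$ case yields $C[n^{l/2}+n^{2-2\eta}]\|\ba\|^l\le Cn^2\|\ba\|^4$ at $l=4$, with the off-diagonal part reduced, as you do, to the $L^2$ bound applied to $\bh\bh^*$. Your bookkeeping of the diagonal fourth-moment remainder via the truncation level is also consistent with (indeed slightly sharper than) the paper's own estimate $n^{2-2\eta}\|\ba\|^4$ for that term.
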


\subsection{Some Existing Lemmas}

\begin{lemma}[Corollary 7.3.8 of \cite{h1985}]\label{le1}
For $r\times s$ matrices $\ba$ and $\bb$ with respective singular values $\sigma_1\ge\sigma_2\ge\cdots\ge\sigma_q,\tau_1\ge\tau_2\ge\cdots\ge\tau_q$, where $q=\min(r,s)$, we have
\begin{align*}
|\sigma_k-\tau_k|\le\|\bb-\ba\|\quad{\rm for \  all} \  k=1,2,\cdots,q.
\end{align*}
\end{lemma}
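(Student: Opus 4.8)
The plan is to obtain the bound from the variational (Courant--Fischer) characterization of singular values, so that the whole matter reduces to the triangle inequality for the spectral norm.

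First I would record the min--max formula: writing $q=\min(r,s)$, for every $1\le k\le q$,
\[
\sigma_k=\max_{\substack{V\subseteq\mathbb{C}^s\\ \dim V=k}}\ \min_{\substack{x\in V\\ \|x\|=1}}\|\ba x\|,
\]
and the analogous identity for $\tau_k$ with $\bb$ in place of $\ba$. This follows directly from the singular value decomposition $\ba=U\,\mathrm{diag}(\sigma_1,\dots)\,W^*$: spanning $V$ by the first $k$ right singular vectors gives the lower bound $\ge\sigma_k$, while any $k$-dimensional $V$ meets the span of the last $s-k+1$ right singular vectors nontrivially, which gives the upper bound $\le\sigma_k$. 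The formula remains correct, returning $0$, when $k$ exceeds the rank of $\ba$.

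Second, for any unit vector $x$ one has $\|\ba x\|\le\|\bb x\|+\|(\ba-\bb)x\|\le\|\bb x\|+\|\bb-\ba\|$, hence for every $k$-dimensional subspace $V$,
\[
\min_{\substack{x\in V\\ \|x\|=1}}\|\ba x\|\ \le\ \min_{\substack{x\in V\\ \|x\|=1}}\|\bb x\|\ +\ \|\bb-\ba\|.
\]
Taking the maximum over all such $V$ yields $\sigma_k\le\tau_k+\|\bb-\ba\|$; exchanging the roles of $\ba$ and $\bb$ gives $\tau_k\le\sigma_k+\|\bb-\ba\|$, and the two together give $|\sigma_k-\tau_k|\le\|\bb-\ba\|$ for all $k$.

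As an alternative I would note the Hermitian-dilation route: the Jordan--Wielandt matrices $\widetilde\ba=\left(\begin{smallmatrix}0&\ba\\ \ba^*&0\end{smallmatrix}\right)$ and $\widetilde\bb$ are Hermitian with eigenvalues $\{\pm\sigma_k\}$ (resp. $\{\pm\tau_k\}$) padded with zeros, $\|\widetilde\ba-\widetilde\bb\|=\|\ba-\bb\|$, and Weyl's perturbation inequality $|\lambda_k(\widetilde\ba)-\lambda_k(\widetilde\bb)|\le\|\widetilde\ba-\widetilde\bb\|$ then yields the claim at once. There is no genuine obstacle here; the only point requiring a little care is the bookkeeping in the variational formula when $r\ne s$ or when $k$ exceeds the rank, so that the $k$-dimensional test subspaces are taken inside $\mathbb{C}^s$ and the formula indeed computes $\sigma_k$. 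Everything else is the triangle inequality (or Weyl's theorem, depending on the route chosen).
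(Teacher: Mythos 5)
Your proposal is correct. Note that the paper offers no proof of this lemma at all: it is quoted verbatim as Corollary 7.3.8 of Horn and Johnson and used as a black box, so there is no internal argument to compare against. Both of your routes are sound. The Courant--Fischer route is complete as written: the max--min formula $\sigma_k=\max_{\dim V=k}\min_{x\in V,\|x\|=1}\|\ba x\|$ is just the variational characterization of the $k$-th largest eigenvalue of $\ba^*\ba$ (with square roots commuting with the max and min by monotonicity), the pointwise triangle inequality $\|\ba x\|\le\|\bb x\|+\|\bb-\ba\|$ passes through the min and then the max exactly as you say, and symmetrizing gives the two-sided bound. Your bookkeeping remark is the right one to make: for $k\le q=\min(r,s)$ the top $k$ eigenvalues of the $s\times s$ matrix $\ba^*\ba$ are indeed $\sigma_1^2,\dots,\sigma_k^2$, so no issue arises from $r\ne s$. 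The Jordan--Wielandt alternative is in fact essentially how the cited textbook derives its Corollary 7.3.8 (Weyl's perturbation theorem applied to the Hermitian dilation), so if you want the proof that matches the reference, that is the one.
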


\begin{lemma}[Lemma 2.6 of \cite{silverstein1995empirical}]\label{le2}
For $z=u+iv\in\mathbb{C}^+$, let $m_1(z),m_2(z)$ be Stieltjes transforms of any two p.d.f.'s, $\ba$ and $\bb$ $n\times n$ with $\ba$ Hermitian nonnegative definite and $\br\in\mathbb{C}^n$. Then
\begin{align*}
(a)&\quad \|(m_1(z)\ba+\bi_n)^{-1}\|\le\max(4\|\ba\|/v,2),\\
(b)&\quad |\br^*\bb(m_1(z)\ba+\bi_n)^{-1}\br-\br^*\bb(m_1(z)\ba+\bi_n)^{-1}\br|\\
&\quad \quad \quad \le|m_2(z)-m_1(z)| \ \|r\|_2^2 \ \|\bb\| \ \|\ba\| \ \max(4\|\ba\|/v,2)^2\notag
\end{align*}
where $\|\br\|_2$ denotes the Euclidean norm on $\br$.
\end{lemma}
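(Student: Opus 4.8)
The plan is to derive both parts of Lemma~\ref{le2} directly from the defining integral representation of a Stieltjes transform together with a single spectral decomposition; no heavy machinery is needed. Write $m_i(z)=\int(t-z)^{-1}\,dH_i(t)$, where $H_i$ is the underlying p.d.f.\ ($i=1,2$), and record the two elementary facts I shall use repeatedly: $\Im m_i(z)=v\int|t-z|^{-2}\,dH_i(t)>0$, and, by the Cauchy--Schwarz inequality applied to the probability measure $H_i$, $|m_i(z)|^{2}\le\int|t-z|^{-2}\,dH_i(t)=v^{-1}\Im m_i(z)$; equivalently $\Im m_i(z)\ge v\,|m_i(z)|^{2}$.

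For part~(a): since $\ba$ is Hermitian and nonnegative definite, $\|(m_1(z)\ba+\bi_n)^{-1}\|=\max_{\lambda}|1+m_1(z)\lambda|^{-1}$, the maximum being over the eigenvalues $\lambda\in[0,\|\ba\|]$ of $\ba$. Fixing such a $\lambda$, I would split into two cases. If $\lambda|m_1(z)|\le\tfrac12$, then $|1+m_1(z)\lambda|\ge 1-\lambda|m_1(z)|\ge\tfrac12$. If instead $\lambda|m_1(z)|>\tfrac12$, then, using $\Im m_1(z)\ge v|m_1(z)|^{2}$,
\begin{align*}
|1+m_1(z)\lambda|\ge\Im\bigl(1+m_1(z)\lambda\bigr)=\lambda\,\Im m_1(z)\ge\lambda v\,|m_1(z)|^{2}\ge\frac{\lambda v}{4\lambda^{2}}=\frac{v}{4\lambda}\ge\frac{v}{4\|\ba\|}.
\end{align*}
Combining the two cases, $|1+m_1(z)\lambda|\ge\min\bigl(\tfrac12,\,v/(4\|\ba\|)\bigr)$ for every eigenvalue $\lambda$, which gives (a); the computation with $m_2$ in place of $m_1$ is identical.

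For part~(b): I would start from the resolvent identity
\begin{align*}
(m_1(z)\ba+\bi_n)^{-1}-(m_2(z)\ba+\bi_n)^{-1}=\bigl(m_2(z)-m_1(z)\bigr)\,(m_2(z)\ba+\bi_n)^{-1}\,\ba\,(m_1(z)\ba+\bi_n)^{-1},
\end{align*}
multiply on the left by $\br^{*}\bb$ and on the right by $\br$, pass to absolute values, and estimate factor by factor: the scalar $|m_2(z)-m_1(z)|$ factors out, the two resolvent operator norms are each at most $\max(4\|\ba\|/v,2)$ by part~(a), and the remaining matrix and vector factors contribute $\|\bb\|$, $\|\ba\|$, and $\|\br\|_{2}^{2}$. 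This is exactly the asserted bound.

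I expect no serious obstacle here: the whole argument is a two-case split followed by a resolvent identity. The one ingredient that is not purely mechanical is the self-improving estimate $\Im m_1(z)\ge v\,|m_1(z)|^{2}$ coming from Cauchy--Schwarz on the defining measure; it is precisely this that turns the crude bound for $|1+m_1(z)\lambda|$ on the branch $\lambda|m_1(z)|>\tfrac12$ into a lower bound of order $v/\|\ba\|$, and hence produces the clean constant $\max(4\|\ba\|/v,2)$ rather than something weaker.
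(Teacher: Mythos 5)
Your proof is correct. The paper itself offers no proof of this lemma---it is quoted verbatim from Silverstein and Bai (1995) in the list of auxiliary lemmas (note the statement of part (b) even carries over a typo: one of the two $m_1$'s should be $m_2$, as you correctly interpreted)---and your argument, resting on the Cauchy--Schwarz bound $\Im m_1(z)\ge v\,|m_1(z)|^{2}$ for the two-case estimate in (a) and the resolvent identity for (b), is precisely the standard proof from the cited source.
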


\begin{lemma}[\cite{burkholder1973distribution}]\label{le4}
Let $\{X_k\}$ be a complex martingale difference sequence with respect to the increasing $\sigma$-field $\{\mathcal{F}_k\}$. Then, for $l>1$,
\begin{align*}
\re\left|\sum X_k\right|^l\le C_l\re\left(\sum|X_k|^2\right)^{l/2}.
\end{align*}
\end{lemma}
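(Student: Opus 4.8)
\textbf{Proof proposal for Lemma \ref{le4}.}

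This is the moment form of Burkholder's martingale inequality, and my plan is to reduce the complex case to the well-known real-valued one. For a \emph{real} martingale difference sequence $\{Y_k\}$ adapted to $\{\mathcal F_k\}$ and any $l>1$, the square-function inequality of \cite{burkholder1973distribution} gives $\re\bigl|\sum_k Y_k\bigr|^l\le C_l\,\re\bigl(\sum_k Y_k^2\bigr)^{l/2}$ (with $S=(\sum_k Y_k^2)^{1/2}$ the square function); I will take this as known.

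To pass to complex scalars, write $X_k=Y_k+iZ_k$ with $Y_k=\Re X_k$ and $Z_k=\Im X_k$. From $\re(X_k\mid\mathcal F_{k-1})=0$, separating real and imaginary parts yields $\re(Y_k\mid\mathcal F_{k-1})=\re(Z_k\mid\mathcal F_{k-1})=0$, and since $Y_k,Z_k$ are visibly $\mathcal F_k$-measurable and integrable, $\{Y_k\}$ and $\{Z_k\}$ are real martingale difference sequences for the \emph{same} filtration. Then $\bigl|\sum_k X_k\bigr|^2=\bigl(\sum_k Y_k\bigr)^2+\bigl(\sum_k Z_k\bigr)^2$, so by the elementary bound $(a+b)^{l/2}\le C_l\bigl(a^{l/2}+b^{l/2}\bigr)$ for $a,b\ge0$,
\[
\re\Bigl|\sum_k X_k\Bigr|^l\le C_l\,\re\Bigl|\sum_k Y_k\Bigr|^l+C_l\,\re\Bigl|\sum_k Z_k\Bigr|^l.
\]
Applying the real-valued inequality to each term on the right, and then using $Y_k^2\le Y_k^2+Z_k^2=|X_k|^2$, $Z_k^2\le|X_k|^2$ together with the monotonicity of $t\mapsto t^{l/2}$, both $\re\bigl(\sum_k Y_k^2\bigr)^{l/2}$ and $\re\bigl(\sum_k Z_k^2\bigr)^{l/2}$ are at most $\re\bigl(\sum_k|X_k|^2\bigr)^{l/2}$. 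Combining and renaming the constant gives the claim; for an infinite index set one applies this to finite partial sums and lets the number of terms tend to infinity, the right-hand side being assumed finite.

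I expect no real obstacle here beyond invoking the real case: the single point needing care is that the complex martingale-difference property descends to the real and imaginary parts with respect to the \emph{same} $\sigma$-fields, which is immediate. If a fully self-contained treatment were desired, the genuinely delicate range is $1<l<2$, where the real square-function inequality is not elementary and relies on the good-$\lambda$ comparison of the maximal and square functions carried out in \cite{burkholder1973distribution}; for $l\ge2$ a much shorter direct proof is available from Doob's maximal inequality and the $L^2$-orthogonality of martingale increments, and one could substitute that if only $l\ge2$ were needed.
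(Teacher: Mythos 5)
The paper offers no proof of Lemma \ref{le4} at all: it is stated as a known result and attributed directly to \cite{burkholder1973distribution}, so there is no in-paper argument to compare against. Your reduction of the complex case to the real square-function inequality --- splitting $X_k=Y_k+iZ_k$, checking that $\{Y_k\}$ and $\{Z_k\}$ remain martingale difference sequences for the same filtration, using $(a+b)^{l/2}\le C_l(a^{l/2}+b^{l/2})$, and dominating $\sum Y_k^2$ and $\sum Z_k^2$ by $\sum|X_k|^2$ --- is correct and is exactly the standard way to pass from the cited real-valued inequality to the complex statement the paper uses.
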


\begin{lemma}[ \cite{burkholder1973distribution}]\label{le6}
Let $\{X_k\}$ be a complex martingale difference sequence with respect to the increasing $\sigma$-field $\{\mathcal{F}_k\}$. Then, for $l\ge2$,
\begin{align*}
\re\left|\sum X_k\right|^l\le C_l\left[\re\left(\re\left(\sum|X_k|^2|\mathcal{F}_{k-1}\right)^{l/2}\right)+\sum\re|X_k|^l\right].
\end{align*}
\end{lemma}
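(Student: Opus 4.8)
This is (a version of) the Burkholder--Rosenthal inequality for martingale differences, and the plan is to deduce it from the already-available Lemma~\ref{le4} by analysing the square function $[S]=\sum_k|X_k|^2$ of the martingale $S=\sum_kX_k$. First I would assume, without loss of generality, that $\sum_k\re|X_k|^l<\infty$, since otherwise the right-hand side is infinite and there is nothing to prove; as the index set is finite this already forces every quantity appearing below to be finite, which is exactly what will make the final absorption argument legitimate. By Lemma~\ref{le4} applied with exponent $l\ge 2>1$ to $\{X_k\}$, one has $\re|S|^l\le C_l\,\re[S]^{l/2}$, so the whole problem reduces to bounding $\re[S]^{l/2}$ by the right-hand side of the lemma.

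Next I would split the square function into its predictable part and a martingale remainder: $[S]=\langle S\rangle+D$, where $\langle S\rangle=\sum_k\re\big(|X_k|^2\mid\mathcal F_{k-1}\big)$ is the sum of the conditional second moments and $D=\sum_kZ_k$ with $Z_k=|X_k|^2-\re\big(|X_k|^2\mid\mathcal F_{k-1}\big)$, which is again a martingale difference sequence for $\{\mathcal F_k\}$. Since $[S]\le\langle S\rangle+|D|$ and $l/2\ge 1$, this gives $\re[S]^{l/2}\le C_l\big(\re\langle S\rangle^{l/2}+\re|D|^{l/2}\big)$; the first term is already in the desired form, so only $\re|D|^{l/2}$ remains. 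For $l=2$ the lemma is immediate (indeed $\re|S|^2=\re\langle S\rangle$), so I may assume $l>2$, whence $l/2>1$ and Lemma~\ref{le4} applies a second time, now to $D$, yielding $\re|D|^{l/2}\le C_l\,\re\big(\sum_k|Z_k|^2\big)^{l/4}$.

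The crux is a pointwise estimate for $\sum_k|Z_k|^2$. From $|Z_k|\le|X_k|^2+\re\big(|X_k|^2\mid\mathcal F_{k-1}\big)$ I get $\sum_k|Z_k|^2\le 2\sum_k|X_k|^4+2\sum_k\big(\re(|X_k|^2\mid\mathcal F_{k-1})\big)^2\le 2\big(\max_k|X_k|^2\big)[S]+2\langle S\rangle^2$, where in the last step I use that the summands are nonnegative, so $\sum_k\big(\re(|X_k|^2\mid\mathcal F_{k-1})\big)^2\le\big(\max_k\re(|X_k|^2\mid\mathcal F_{k-1})\big)\langle S\rangle\le\langle S\rangle^2$. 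Substituting back, and then using Cauchy--Schwarz together with $\re(\max_k|X_k|)^l\le\sum_k\re|X_k|^l$, I obtain $\re|D|^{l/2}\le C_l\Big(\big(\textstyle\sum_k\re|X_k|^l\big)^{1/2}\big(\re[S]^{l/2}\big)^{1/2}+\re\langle S\rangle^{l/2}\Big)$.

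Finally, writing $u=\re[S]^{l/2}$, $A=\re\langle S\rangle^{l/2}$ and $B=\sum_k\re|X_k|^l$, the displays above combine to $u\le C_l\big(A+B^{1/2}u^{1/2}\big)$; since $u<\infty$, the elementary bound $C_lB^{1/2}u^{1/2}\le\tfrac12u+\tfrac12C_l^2B$ absorbs the square-root term and gives $u\le 2C_lA+C_l^2B$, which fed back into the first step completes the proof. I expect the main obstacle to be exactly this self-referential estimate: one must dominate $\sum_k|X_k|^4$ by $\big(\max_k|X_k|^2\big)\cdot[S]$ so that the offending term returns as a multiple of $u^{1/2}$ and can be absorbed, rather than iterating Lemma~\ref{le4} on $D$ through all dyadic scales (which would force one to carry along conditional fourth moments $\re(|X_k|^4\mid\mathcal F_{k-1})$ and is considerably more awkward); and one must have the a priori finiteness of $u$, which holds trivially here only because the index set is finite.
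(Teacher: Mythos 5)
Your argument is correct, but note that the paper does not prove this lemma at all: it is listed among the ``existing lemmas'' and simply cited from Burkholder (1973), so there is no in-paper proof to compare against. What you have written is the standard self-contained derivation of the Burkholder--Rosenthal inequality from the plain Burkholder inequality (the paper's Lemma~\ref{le4}): first $\re|S|^l\le C_l\re[S]^{l/2}$ with $[S]=\sum_k|X_k|^2$, then the decomposition $[S]=\langle S\rangle+D$ into the predictable quadratic variation and a new martingale $D=\sum_k Z_k$, a second application of Lemma~\ref{le4} to $D$ (legitimate since $l/2>1$ once the trivial case $l=2$ is set aside), the pointwise bound $\sum_k|Z_k|^2\le 2(\max_k|X_k|^2)[S]+2\langle S\rangle^2$, Cauchy--Schwarz with $\re(\max_k|X_k|)^l\le\sum_k\re|X_k|^l$, and finally absorption of the term $C_lB^{1/2}u^{1/2}$ into $u=\re[S]^{l/2}$. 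Each step checks out, and you correctly flag the one point that needs care, namely the a priori finiteness of $u$ (and of $\re\langle S\rangle^{l/2}$, which follows from conditional Jensen), which holds because the sequence is finite; without that the absorption step would be vacuous. This route is arguably preferable to the alternative of iterating Lemma~\ref{le4} down dyadic scales, since it avoids tracking higher conditional moments. The only cosmetic caveat is that your constants $C_l$ change from line to line and the final constant depends on both applications of Lemma~\ref{le4}, but that is harmless for the stated form of the result.
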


\begin{lemma}[(3.3.41) of  \cite{h1991}]\label{le5}
For $n\times n$ Hermitian $\ba=(a_{jk})$ with eigenvalues $\lambda_1,\cdots,\lambda_n$, and convex function $f(\cdot)$, we have
\begin{align*}
\sum_{j=1}^nf(a_{jj})\le\sum_{j=1}^nf(\lambda_j).
\end{align*}
\end{lemma}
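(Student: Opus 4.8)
The plan is to reduce the statement to the classical majorization fact that the diagonal of a Hermitian matrix is dominated, in the sense of majorization, by its vector of eigenvalues, and then to finish with Jensen's inequality. This is of course a textbook result, quoted here as (3.3.41) of \cite{h1991}; below I sketch the standard argument.

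First I would diagonalize $\ba$. Since $\ba$ is Hermitian there is a unitary matrix $U=(u_{jk})$ with $\ba=U\,\mathrm{diag}(\lambda_{1},\dots,\lambda_{n})\,U^{*}$. Comparing the $(j,j)$ entries of the two sides gives $a_{jj}=\sum_{k=1}^{n}|u_{jk}|^{2}\lambda_{k}$. Put $s_{jk}=|u_{jk}|^{2}\ge 0$. Orthonormality of the rows of $U$ gives $\sum_{k}s_{jk}=1$ for each $j$, and orthonormality of the columns of $U$ gives $\sum_{j}s_{jk}=1$ for each $k$; thus $S=(s_{jk})$ is doubly stochastic and the diagonal vector $(a_{11},\dots,a_{nn})$ is the image under $S$ of the eigenvalue vector $(\lambda_{1},\dots,\lambda_{n})$.

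Next I would apply convexity. For each fixed $j$, $a_{jj}$ is a convex combination of $\lambda_{1},\dots,\lambda_{n}$ with weights $s_{j1},\dots,s_{jn}$, so Jensen's inequality yields $f(a_{jj})\le\sum_{k=1}^{n}s_{jk}f(\lambda_{k})$. Summing over $j$ and interchanging the order of summation,
\[
\sum_{j=1}^{n}f(a_{jj})\le\sum_{k=1}^{n}\Big(\sum_{j=1}^{n}s_{jk}\Big)f(\lambda_{k})=\sum_{k=1}^{n}f(\lambda_{k}),
\]
the last equality because the columns of $S$ sum to $1$. This is exactly the claimed inequality.

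There is essentially no obstacle: the only point deserving attention is the verification that $S$ is doubly stochastic, which is immediate from unitarity of $U$. Alternatively, one may invoke Birkhoff's theorem to write $S$ as a convex combination of permutation matrices, conclude that $(a_{jj})$ is majorized by $(\lambda_{j})$, and then appeal to the Hardy--Littlewood--P\'olya theorem; this is the route taken in \cite{h1991}. The genuinely substantive estimates needed in this paper lie elsewhere, notably in Lemma~\ref{le3}.
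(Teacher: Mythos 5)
Your argument is correct: the identity $a_{jj}=\sum_k |u_{jk}|^2\lambda_k$, the double stochasticity of $(|u_{jk}|^2)$ from unitarity, and Jensen's inequality together give the claim with no gaps. The paper itself offers no proof of this lemma, simply citing (3.3.41) of Horn and Johnson, and your derivation is precisely the standard argument behind that reference, so there is nothing further to reconcile.
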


\begin{lemma}[Corollary 2.1 of  \cite{hall1980martingale}]\label{le7}
If $\{X_j,\mathcal{F}_j,1\le j\le n\}$ is a martingale, then for each $l\ge1$ and $\alpha>0$,
\begin{align*}
\alpha^l{\rm P}\left(\max_{j\le n}|X_j|>\alpha\right)\le\re|X_n|^l.
\end{align*}
\end{lemma}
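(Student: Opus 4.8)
The plan is to reduce the statement to Doob's maximal inequality for nonnegative submartingales applied to the process $Y_j:=|X_j|^l$. The first step is to check that $\{Y_j,\mathcal F_j\}$ is itself a nonnegative submartingale. For $l\ge1$ the map $z\mapsto|z|^l$ is convex on $\mathbb C\cong\mathbb R^2$, being the composition of the modulus (a norm, hence convex) with the convex nondecreasing function $t\mapsto t^l$ on $[0,\infty)$; so the conditional Jensen inequality together with the martingale property $\re(X_{j+1}\mid\mathcal F_j)=X_j$ gives
\begin{align*}
\re\left(Y_{j+1}\mid\mathcal F_j\right)=\re\left(|X_{j+1}|^l\mid\mathcal F_j\right)\ge\left|\re\left(X_{j+1}\mid\mathcal F_j\right)\right|^l=Y_j.
\end{align*}
If $\re|X_n|^l=\infty$ the asserted bound is trivial; otherwise iterating the above yields $Y_j\le\re(|X_n|^l\mid\mathcal F_j)$, hence $\re|X_j|^l\le\re|X_n|^l<\infty$ for all $j\le n$, so no integrability difficulty arises.

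Next I would establish the submartingale maximal inequality $\beta\,{\rm P}\left(\max_{j\le n}Y_j\ge\beta\right)\le\re Y_n$ for $\beta>0$ by the standard first-passage argument: set $\tau=\min\{j\le n:Y_j\ge\beta\}$ (with $\tau=\infty$ on the complementary event), decompose $\{\max_{j\le n}Y_j\ge\beta\}=\bigcup_{k=1}^n\{\tau=k\}$ into disjoint pieces, note $\{\tau=k\}\in\mathcal F_k$, and use $Y_k\ge\beta$ on $\{\tau=k\}$ together with the submartingale property to get
\begin{align*}
\beta\,{\rm P}(\tau=k)\le\re\left(Y_k\,I(\tau=k)\right)\le\re\left(\re(Y_n\mid\mathcal F_k)\,I(\tau=k)\right)=\re\left(Y_n\,I(\tau=k)\right).
\end{align*}
Summing over $k=1,\dots,n$ and using $Y_n\ge0$ gives $\beta\,{\rm P}\left(\max_{j\le n}Y_j\ge\beta\right)\le\re\left(Y_n\,I(\tau\le n)\right)\le\re Y_n$.

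Finally, taking $\beta=\alpha^l$ and observing that $\{\max_{j\le n}Y_j\ge\alpha^l\}=\{\max_{j\le n}|X_j|\ge\alpha\}\supseteq\{\max_{j\le n}|X_j|>\alpha\}$ yields
\begin{align*}
\alpha^l\,{\rm P}\left(\max_{j\le n}|X_j|>\alpha\right)\le\alpha^l\,{\rm P}\left(\max_{j\le n}|X_j|\ge\alpha\right)\le\re|X_n|^l,
\end{align*}
which is the claim. Since every ingredient is classical, I do not expect a genuine obstacle; the only points needing a little care are the convexity/conditional-Jensen step in the complex-valued setting and the $\mathcal F_k$-measurability of $\{\tau=k\}$, both routine. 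Alternatively, as the attribution indicates, one may simply invoke Doob's $L^p$-type maximal inequality from \cite{hall1980martingale}.
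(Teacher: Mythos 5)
Your proof is correct. The paper offers no proof of this lemma at all---it is simply quoted as Corollary 2.1 of Hall and Heyde (1980)---so there is nothing to compare against; your argument (conditional Jensen to show $|X_j|^l$ is a nonnegative submartingale, then the first-passage decomposition giving Doob's maximal inequality) is the standard, complete derivation of exactly this statement, and the two points you flag as needing care (convexity of $z\mapsto|z|^l$ on $\mathbb{C}$ for $l\ge1$, and $\{\tau=k\}\in\mathcal F_k$) are handled correctly.
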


\begin{lemma}[Lemma 6.8 of \cite{bai2010spectral}]\label{le8}
If, for all $t>0$, $t^l{\rm P}(|X|>t)\le C$ for some positive $l$, then for any positive $q<l$,
\begin{align*}
\re|X|^q\le C^{q/l}\left(\frac l{l-q}\right).
\end{align*}
\end{lemma}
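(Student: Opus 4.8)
The plan is to deduce the moment bound from the tail bound by integrating the tail, i.e.\ via the layer-cake representation of the expectation. First I would record that, since $|X|^q$ is a nonnegative random variable, $\re|X|^q=\int_0^\infty{\rm P}(|X|^q>s)\,ds$, and the substitution $s=t^q$ turns this into
$$\re|X|^q=\int_0^\infty q\,t^{q-1}\,{\rm P}(|X|>t)\,dt,$$
so the whole problem reduces to estimating this single integral using the two available bounds on the tail: the trivial ${\rm P}(|X|>t)\le 1$ and the hypothesised ${\rm P}(|X|>t)\le C t^{-l}$.

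Next I would split the integral at the crossover point $t_0=C^{1/l}$, which is exactly where $Ct^{-l}$ drops below $1$. On $(0,t_0]$ I would use ${\rm P}(|X|>t)\le 1$, contributing $\int_0^{t_0}q\,t^{q-1}\,dt=t_0^q=C^{q/l}$; on $(t_0,\infty)$ I would use ${\rm P}(|X|>t)\le Ct^{-l}$, and here the assumption $q<l$ is exactly what makes the exponent $q-1-l$ strictly less than $-1$, so the tail integral converges and equals $qC\int_{t_0}^\infty t^{q-1-l}\,dt=\frac{qC}{l-q}t_0^{q-l}=\frac{q}{l-q}C^{q/l}$. Summing the two pieces gives
$$\re|X|^q\le C^{q/l}+\frac{q}{l-q}C^{q/l}=C^{q/l}\Bigl(1+\frac{q}{l-q}\Bigr)=C^{q/l}\,\frac{l}{l-q},$$
which is the claimed inequality with the stated constant.

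There is essentially no genuine obstacle here; the argument is a one-line integral estimate once the layer-cake identity is in place. The only points requiring a little care are: choosing the splitting point to be $C^{1/l}$ (rather than, say, $1$) so that the constant comes out exactly $l/(l-q)$ and not merely $O(1)$; checking that the strict inequality $q<l$ is invoked precisely at the step where convergence of $\int_{t_0}^\infty t^{q-1-l}\,dt$ is needed; and noting that the proof uses nothing about $X$ beyond ${\rm P}(|X|>t)\le\min(1,Ct^{-l})$, so no separate integrability or moment hypothesis is required.
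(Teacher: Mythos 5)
Your proof is correct: the layer-cake identity, the split at $t_0=C^{1/l}$, and the use of $q<l$ for convergence of the tail integral together yield exactly the constant $C^{q/l}\,l/(l-q)$. The paper itself only cites this as Lemma 6.8 of Bai and Silverstein (2010) without reproducing a proof, and your argument is precisely the standard one given there, so there is nothing to add.
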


\begin{lemma}[Lemma 2.4 of  \cite{silverstein1995empirical}]\label{le9}
For $n\times n$ Hermitian $\ba$ and $\bb$,
\begin{align*}
\|F^{\ba}-F^{\bb}\|_{KS}\le\frac1n{\rm rank}(\ba-\bb),
\end{align*}
here $\|\cdot\|_{KS}$ denoting the sup norm on functions.
\end{lemma}

\newpage

\begin{supplement}[id=se2]
 \sname{Supplement A}
\stitle{a convergence rate of ${\underline m}_n(z)$}
\sdescription{
Let $\re_0(\cdot)$ denote expectation and $\re_k(\cdot)$ denote conditional expectation with respect to the $\sigma$-field generated by $\br_1,\cdots,\br_k$, we shall show that for $z=u+iv_n=u+in^{-6\eta/49}$ and $r\ge1$,
\begin{align*}
\max_{k\le n}\re_k\left(v_n^{-r}\sup_{u\in\mathbb{R}}\left|\underline m_n(z)-\underline m_n^0(z)\right|^r\right)\xrightarrow{a.s.}0.
\end{align*}
\\
To begin with, we deduce three equalities in order to obtaining the expression of $\underline m_n(z)-\underline m_n^0(z)$. Write
\begin{align*}
\bd_n+z\bi_p=\sum_{j=1}^n\br_j\br_j^*.
\end{align*}
Taking the inverse of $\bd_n$ on the both sides and using (\ref{al4}), we have
\begin{align*}
\bi_p+z\bd_n^{-1}=\sum_{j=1}^n{\br_j\br_j^*}\bd_n^{-1}=\sum_{j=1}^n\rho_j{\br_j\br_j^*}\bd_{nj}^{-1}.
\end{align*}
Then, we deduce by taking the trace on both sides and dividing by $n$,
\begin{align*}
c_n+c_nzm_n(z)=\frac1n\sum_{j=1}^n\rho_j{\br_j^*}\bd_{nj}^{-1}\br_j=1-\frac1n\sum_{j=1}^n\rho_j.
\end{align*}
Together with $(\ref{al2})$, one gets
\begin{align}\label{al5}
\underline m_n(z)=-\frac1{zn}\sum_{j=1}^n\rho_j.
\end{align}
\\
Write
\begin{align*}
\bd_n-\left(-z\underline m_n(z)\bS_p-z\bi_p\right)=\sum_{j=1}^n\br_j\br_j^*-(-z\underline m_n(z))\bS_p.
\end{align*}
Taking inverses and using (\ref{al4}) and (\ref{al5}), we have
\begin{align*}
\left(-z\underline m_n(z)\bS_p-z\bi_p\right)^{-1}&-\bd_n^{-1}=-z^{-1}\left(\underline m_n(z)\bS_p+\bi_p\right)^{-1}\left[\sum_{j=1}^n\br_j\br_j^*-(-z\underline m_n(z))\bS_p\right]\bd_n^{-1}\\
=&-z^{-1}\left(\underline m_n(z)\bS_p+\bi_p\right)^{-1}\left[\sum_{j=1}^n\rho_j\br_j\br_j^*\bd_{nj}^{-1}-\frac1n\sum_{j=1}^n\rho_j\bS_p\bd_n^{-1}\right]\\
=&-z^{-1}\sum_{j=1}^n\rho_j\left(\underline m_n(z)\bS_p+\bi_p\right)^{-1}\left[\br_j\br_j^*\bd_{nj}^{-1}-\frac1n\bS_p\bd_n^{-1}\right].
\end{align*}
Taking the trace and dividing by $p$, we see
\begin{align*}
w_n=&\frac1p\rtr\left(-z\underline m_n(z)\bS_p-z\bi_p\right)^{-1}-m_n(z)\\
=&-\frac1{p z}\sum_{j=1}^n\rho_j\Bigg\{\br_j^*\bd_{nj}^{-1}\left(\underline m_n(z)\bS_p+\bi_p\right)^{-1}\br_j
-\frac1n\rtr\left[\left(\underline m_n(z)\bS_p+\bi_p\right)^{-1}\bS_p\bd_n^{-1}\right]\Bigg\}\\
\triangleq&-\frac1{p z}\sum_{j=1}^n\rho_jd_j.
\end{align*}
\\
Rewriting $w_n$ in terms of $\underline m_n(z)$, it follows that by (\ref{al2})
\begin{align*}
w_n=&-\frac1{z}\int\frac1{\underline m_n(z)t+1}dH_n(t)-\frac{z\underline m_n(z)+1-c_n}{c_nz}\\
=&\frac{\underline m_n(z)}{c_nz}\left[{c_n}\int\frac t{\underline m_n(z)t+1}dH_n(t)-z-\frac{1}{\underline m_n(z)}\right]\\
\triangleq&\frac{\underline m_n(z)}{c_nz} R_n
\end{align*}
which yields $R_n={c_nzw_n}/{\underline m_n(z)}$ and
\begin{align}\label{al7}
{\underline m_n(z)}=&\frac{1}{{c_n}\int\frac t{\underline m_n(z)t+1}dH_n(t)-z-R_n}.
\end{align}
Consequently, we obtain from the above equality and (\ref{al6}),
\begin{align*}
{\underline m_n(z)}-&{\underline m_n^0(z)}=\frac{1}{{c_n}\int\frac t{\underline m_n(z)t+1}dH_n(t)-z-R_n}-\frac{1}{{c_n}\int\frac t{\underline m_n^0(z)t+1}dH_n(t)-z}\\
=&\frac{{c_n}\left({\underline m_n(z)}-{\underline m_n^0(z)}\right)\int\frac {t^2}{\left(\underline m_n(z)t+1\right)\left(\underline m_n^0(z)t+1\right)}dH_n(t)}{\left({c_n}\int\frac t{\underline m_n(z)t+1}dH_n(t)-z-R_n\right)\left({c_n}\int\frac t{\underline m_n^0(z)t+1}dH_n(t)-z\right)}+{\underline m_n(z)}{\underline m_n^0(z)}R_n.
\end{align*}
\\
Let $\underline m_{n2}(z)=\Im \underline m_n(z)$, then from (\ref{al6}) and (\ref{al7})
\begin{align*}
\underline m_2^0(z)=\frac{v_n+\underline m_2^0(z)c_n\int\frac{t^2}{|1+t\underline m_n^0(z)|^2}dH_n(t)}{\left|-z+c_n\int\frac{t}{1+t\underline m_n^0(z)}dH_n(t)\right|^2}
\end{align*}
and
\begin{align*}
\underline m_{n2}(z)=\frac{v_n+\underline m_{n2}(z)c_n\int\frac{t^2}{|1+t\underline m_n(z)|^2}dH_n(t)+\Im R_n}{\left|-z+c_n\int\frac{t}{1+t\underline m_n(z)}dH_n(t)-R_n\right|^2}.
\end{align*}
When $|\Im R_n|<v_n$, by the Cauchy-Schwarz inequality and the fact (see (3.17 in \cite{Bai1998No}))
\begin{align*}
\left(\frac{\underline m_2^0(z)c_n\int\frac{t^2}{|1+t\underline m_n^0(z)|^2}dH_n(t)}{v_n+\underline m_2^0(z)c_n\int\frac{t^2}{|1+t\underline m_n^0(z)|^2}dH_n(t)}\right)^{1/2}<1-Cv_n^2,
\end{align*}
we have
\begin{align*}
&\left|\frac{{c_n}\int\frac {t^2}{\left(\underline m_n(z)t+1\right)\left(\underline m_n^0(z)t+1\right)}dH_n(t)}{\left({c_n}\int\frac t{\underline m_n(z)t+1}dH_n(t)-z-R_n\right)\left({c_n}\int\frac t{\underline m_n^0(z)t+1}dH_n(t)-z\right)}\right|\\
\le&\left(\frac{{c_n}\int\frac {t^2}{\left|\underline m_n(z)t+1\right|^2}dH_n(t)}{\left|{c_n}\int\frac t{\underline m_n(z)t+1}dH_n(t)-z-R_n\right|^2}\right)^{1/2}
\left(\frac{{c_n}\int\frac {t^2}{\left|\underline m_n^0(z)t+1\right|^2}dH_n(t)}{\left|{c_n}\int\frac t{\underline m_n^0(z)t+1}dH_n(t)-z\right|^2}\right)^{1/2}\\
\le&\left(\frac{{c_n}\underline m_{n2}(z)\int\frac {t^2}{\left|\underline m_n(z)t+1\right|^2}dH_n(t)}{v_n+{c_n}\underline m_{n2}(z)\int\frac {t^2}{\left|\underline m_n(z)t+1\right|^2}dH_n(t)+\Im R_n}\right)^{1/2}
\left(\frac{{c_n}\underline m_2^0(z)\int\frac {t^2}{\left|\underline m_n^0(z)t+1\right|^2}dH_n(t)}{v_n+{c_n}\underline m_2^0(z)\int\frac {t^2}{\left|\underline m_n^0(z)t+1\right|^2}dH_n(t)}\right)^{1/2}\\
\le&
\left(\frac{{c_n}\underline m_2^0(z)\int\frac {t^2}{\left|\underline m_n^0(z)t+1\right|^2}dH_n(t)}{v_n+{c_n}\underline m_2^0(z)\int\frac {t^2}{\left|\underline m_n^0(z)t+1\right|^2}dH_n(t)}\right)^{1/2}\le1-Cv_n^2.
\end{align*}
Under the condition $|\Im R_n|<v_n$, one gets
\begin{align*}
\left|{\underline m_n(z)}-{\underline m_n^0(z)}\right|\le C^{-1}v_n^{-2}|{\underline m_n(z)}{\underline m_n^0(z)}R_n|=C^{-1}v_n^{-2}|c_nz{\underline m_n^0(z)}w_n|.
\end{align*}
\\
Letting $\mu_n=n^{6\eta/343}$, we can assert that when $|u|\le \mu_nv_n^{-1}$, $|w_n|\le \mu_n^{-1}v_n^5$, and $\lambda_{max}\le K\log n$, we have for large $n$, $|z|\le 2\mu_nv_n^{-1}$ and
\begin{align*}
|R_n|<v_n.
\end{align*}
In fact, on the set $\{\lambda_{max}\le K\log n\}$ and $|u|\le \mu_nv_n^{-1}$, we give a lower bound of $|\underline m_n(z)|$. When $-\mu_nv_n^{-1}\le u\le -v_n$ ot $\lambda_{\max}+v_n\le u\le \mu_nv_n^{-1}$,
\begin{align*}
|\underline m_n(z)|\ge|\Re\underline m_n(z)|\ge\frac{K\log n+\mu_nv_n^{-1}}{\left(K\log n+\mu_nv_n^{-1}\right)^2+v_n^2}\ge\frac1{2\mu_nv_n^{-1}}
\end{align*}
for large $n$. When $-v_n<u<\lambda_{\max}+v_n$,
\begin{align*}
|\underline m_n(z)|\ge|\Im\underline m_n(z)|\ge\frac{v_n}{\left(K\log n+v_n\right)^2+v_n^2}\ge\frac{v_n}{\mu_n}
\end{align*}
for all large $n$. These yield $|\underline m_n(z)|\ge\frac12\mu_n^{-1}v_n$. Therefore, when $|u|\le \mu_nv_n^{-1}$, $|w_n|\le \mu_n^{-1}v_n^5$, and $\lambda_{max}\le K\log n$, we have for large $n$, $|z|\le 2\mu_nv_n^{-1}$ and
\begin{align*}
|R_n|=|c_nzw_n/\underline m_n(z)|\le 4c_n\mu_nv_n^{-1}\mu_n^{-1}v_n^5\mu_nv_n^{-1}\le 4c_nv_n^{2}<v_n.
\end{align*}
Under the condition $|\Im R_n|<v_n$, one gets
\begin{align*}
\left|{\underline m_n(z)}-{\underline m_n^0(z)}\right|\le C^{-1}v_n^{-2}|c_nz{\underline m_n^0(z)}w_n|\le C\mu_n^{-1}v_n^2
\end{align*}
where the last inequality if from $|z\underline m_n^0(z)|\le1+C/v_n$.
\\
Thus, it follows that
\begin{align*}
\left|{\underline m_n(z)}-{\underline m_n^0(z)}\right|=&\left|{\underline m_n(z)}-{\underline m_n^0(z)}\right|I\left(|u|\le \mu_nv_n^{-1},|w_n|\le \mu_n^{-1}v_n^5,\lambda_{max}\le K\log n\right)\\
&+\left|{\underline m_n(z)}-{\underline m_n^0(z)}\right|I\left(|u|> \mu_nv_n^{-1},|w_n|\le \mu_n^{-1}v_n^5,\lambda_{max}\le K\log n\right)\\
&+\left|{\underline m_n(z)}-{\underline m_n^0(z)}\right|I\left(|w_n|> \mu_n^{-1}v_n^5 \ {\rm or} \ \lambda_{max}\le K\log n\right)\\
\le &C\mu_n^{-1}v_n^2+\frac{2}{\mu_nv_n^{-1}-K\log n}+\frac2{v_n}I\left(|w_n|> \mu_n^{-1}v_n^5 \ {\rm or} \ \lambda_{max}> K\log n\right)\\
\le &C\mu_n^{-1}v_n^1+\frac2{v_n}I\left(|w_n|> \mu_n^{-1}v_n^5 \ {\rm or} \ \lambda_{max}>K\log n\right).
\end{align*}
\\
In \ref{se1}, we obtain that for any subsets $S_n\subset\mathbb{R}$ containing at most $n$ elements, any $l\ge1/2$ and all $\varepsilon>0$,
\begin{align*}
&{\rm P}\left(\max_{u\in S_n}|w_n|v_n^{-5}>\varepsilon\right)\le C_l\varepsilon^{-2l}n^{2-2\eta l/49}
\end{align*}
which implies that
\begin{align*}
{\rm P}\left(\max_{u\in S_n}|w_n|\mu_nv_n^{-5}>\varepsilon\right)\le C_l\varepsilon^{-2l}n^{2-2\eta l/343}.
\end{align*}
From (\ref{al11}) and the above inequality, we conclude for large $n$ and any positive $\varepsilon$ and $l>0$,
\begin{align*}
&{\rm P}\left(v_n^{-1}\max_{u\in S_n}\left|{\underline m_n(z)}-{\underline m_n^0(z)}\right|>\varepsilon\right)\\
\le &C_l\varepsilon^{-l}\left[\mu_n^{-l}+{v_n}^{-2l}\left(\sum_{u\in S_n}{\rm P}\left(\mu_n v_n^{-5}|w_n|>1\right)+{\rm P}\left( \lambda_{max}>K\log n\right)\right)\right]\\
\le &C_l\varepsilon^{-l}\left[\mu_n^{-l}+{v_n}^{-2l}\left(n^{3-2\eta t/343}+n^{-t}\right)\right]\\
\le &C_l\varepsilon^{-l}n^{-6\eta l/343}
\end{align*}
where $t\ge45l+1029/2\eta$. It can be verified that for large $n$ and any positive $\varepsilon$ and $l$,
\begin{align}\label{al12}
&{\rm P}\left(v_n^{-1}\max_{u\in \mathbb{R}}\left|{\underline m_n(z)}-{\underline m_n^0(z)}\right|>\varepsilon\right)
\le C_l\varepsilon^{-l}n^{-6\eta l/343}.
\end{align}
\\
 Since for any $l>0$,
\begin{align*}
\re_k\left(v_n^{-l}\max_{u\in \mathbb{R}}\left|{\underline m_n(z)}-{\underline m_n^0(z)}\right|^l\right)
\end{align*}
for $k=0,\cdots,n$ forms a martingale, it follows that for any $t\ge1$,
\begin{align*}
\left[\re_k\left(v_n^{-l}\max_{u\in \mathbb{R}}\left|{\underline m_n(z)}-{\underline m_n^0(z)}\right|^l\right)\right]^t
\end{align*}
for $k=0,\cdots,n$ forms a submartingale. Therefore, for any $\varepsilon>0$, $t\ge1$, and $l>0$, from Lemma \ref{le7}, Lemma \ref{le8}, and (\ref{al12}) with $l$ replaced by $2tl$, we have
\begin{align*}
&{\rm P}\left(\max_{k\le n}\re_k\left(v_n^{-l}\max_{u\in \mathbb{R}}\left|{\underline m_n(z)}-{\underline m_n^0(z)}\right|^l\right)>\varepsilon\right)\\
\le&\varepsilon^{-t}\re\left(v_n^{-lt}\max_{u\in \mathbb{R}}\left|{\underline m_n(z)}-{\underline m_n^0(z)}\right|^{lt}\right) \\
\le&C_{lt}\varepsilon^{-t}n^{-6\eta lt/343}.
\end{align*}
From this and by taking $t>343/(6\eta l)$, it follows that for $l>0$
\begin{align}\label{al13}
\max_{k\le n}\re_k\left(v_n^{-l}\max_{u\in \mathbb{R}}\left|{\underline m_n(z)}-{\underline m_n^0(z)}\right|^l\right)\xrightarrow{a.s.}1.
\end{align}
\\
It can be verified from (\ref{al13}) for $l>0$
\begin{align*}
\max_{k\le n}\re_k\left(F^{\underline\bs_n}{[a,b]}\right)^l=o_{a.s.}(v_n^l)=o_{a.s.}(n^{-6\eta l/49})
\end{align*}
and
\begin{align}\label{al14}
\max_{k\le n}\re_k\left(F^{\underline\bs_n}{[a',b']}\right)^l=o_{a.s.}(v_n^l)=o_{a.s.}(n^{-6\eta l/49})
\end{align}
where the details can be seen in \cite{bai2010spectral}.
}
\end{supplement}

\newpage
\begin{supplement}[id=se1]
\sname{Supplement B}
\stitle{a convergence rate of $w_n$}
\sdescription{
For $z=u+iv_n=u+in^{-6\eta/49}$, we shall show the almost sure convergence of
\begin{align*}
\max_{u\in S_n}\frac{|w_n|}{v_n^5}
\end{align*}
to 0. Let
\begin{align*}
\underline m_{nj}(z)=-(1-c_n)/z+c_nm_{F^{\bs_{nj}}}(z),
\end{align*}
then one finds
\begin{align}\label{al8}
\max_{j\le n}|\underline m_n(z)-\underline m_{nj}(z)|=&\frac1n\max_{j\le n}|\rtr(\bd_n^{-1}-\bd_{nj}^{-1})|=\frac1n\max_{j\le n}|\rho_j\br_j^*\bd_{nj}^{-2}\br_j|\\
\le&\frac1n\max_{j\le n}\frac{\br_j^*\bd_{nj}^{-1}(\bd_{nj}^*)^{-1}\br_j}{\Im(\br_j^*\bd_{nj}^{-1}\br_j)}=\frac1{nv_n}.\notag
\end{align}
Rewrite
\begin{align*}
w_n=&-\frac1{p z}\sum_{j=1}^n\rho_j\Bigg\{\br_j^*\bd_{nj}^{-1}\left(\underline m_n(z)\bS_p+\bi_p\right)^{-1}\br_j
-\frac1n\rtr\left[\left(\underline m_n(z)\bS_p+\bi_p\right)^{-1}\bS_p\bd_n^{-1}\right]\Bigg\}\\
=&-\frac1{p z}\sum_{j=1}^n\rho_j\Bigg\{\br_j^*\bd_{nj}^{-1}\left(\underline m_n(z)\bS_p+\bi_p\right)^{-1}\br_j
-\br_j^*\bd_{nj}^{-1}\left(\underline m_{nj}(z)\bS_p+\bi_p\right)^{-1}\br_j\Bigg\}\\
&-\frac1{p z}\sum_{j=1}^n\rho_j\Bigg\{\br_j^*\bd_{nj}^{-1}\left(\underline m_{nj}(z)\bS_p+\bi_p\right)^{-1}\br_j-\frac1n\rtr\left[\left(\underline m_{nj}(z)\bS_p+\bi_p\right)^{-1}\bS_p\bd_{nj}^{-1}\right]\Bigg\}\\
&-\frac1{p z}\sum_{j=1}^n\rho_j\Bigg\{\frac1n\rtr\left[\left(\underline m_{nj}(z)\bS_p+\bi_p\right)^{-1}\bS_p\bd_{nj}^{-1}\right]-\frac1n\rtr\left[\left(\underline m_{nj}(z)\bS_p+\bi_p\right)^{-1}\bS_p\bd_{n}^{-1}\right]\Bigg\}\\
&-\frac1{p z}\sum_{j=1}^n\rho_j\Bigg\{\frac1n\rtr\left[\left(\underline m_{nj}(z)\bS_p+\bi_p\right)^{-1}\bS_p\bd_{n}^{-1}\right]-\frac1n\rtr\left[\left(\underline m_{n}(z)\bS_p+\bi_p\right)^{-1}\bS_p\bd_{n}^{-1}\right]\Bigg\}\\
\triangleq&-\frac1{p z}\sum_{j=1}^n\rho_j(d_j^1+d_j^2+d_j^3+d_j^4).
\end{align*}
It is easy to verify
\begin{align*}
\Im \br_j^*(z^{-1}\bs_{nj}-\bi_p)^{-1}\br_j\ge0.
\end{align*}
Therefore, for each $j$,
\begin{align*}
|z^{-1}\rho_j|\le\frac1{v_n}.
\end{align*}
Hence, it is sufficient to show the a.s.convergence of
\begin{align}\label{al9}
\max_{j\le n,u\in S_n}\frac{|d_j^k|}{v_n^6}
\end{align}
to 0 for $k=1,2,3,4$.
\\
From Lemma \ref{le2} (b) and (\ref{al8}), we deduce
\begin{align*}
|d_j^1|\le16v_n^{-2}|\underline m_n(z)-\underline m_{nj}(z)| \ \|\br_j\|_2^2 \ \|\bd_{nj}^{-1}\| \ \|\bS_p\|^3\le\frac{C}{nv_n^4}\|{\br}_j\|_2^2.
\end{align*}
Using Lemma \ref{le3}, it follows that, for any $\varepsilon>0$, $l\ge1/2$, and all large $n$,
\begin{align*}
&{\rm P}\left(\max_{j\le n,u\in S_n}\frac{|d_j^1|}{v_n^6}>\varepsilon\right)\le{\rm P}\left(\max_{j\le n}{\|{\br}_j\|_2^2}>\frac{\varepsilon nv_n^{10}}{C}\right)\\
\le&\sum_{j=1}^n{\rm P}\left({\|{\br}_j\|_2^2}>\frac{\varepsilon nv_n^{10}}{C}\right)
\le\left(\frac{C}{\varepsilon nv_n^{10}}\right)^{2l}\sum_{j=1}^n{\rm E}\left({\|{\br}_j\|_2^2}\right)^{2l}\\
\le&\left(\frac{C}{\varepsilon nv_n^{10}}\right)^{2l}n
\frac{n^{(1-\eta)2l}}{n^{2l}}\le\frac{C_ln^{1-2\eta l}}{\left(\varepsilon nv_n^{10}\right)^{2l}}\le C_l\varepsilon^{-2l}n^{1-2\eta l}.
\end{align*}
The last bound is summable when $l>\frac1{\eta}$, so we have $(\ref{al9})\xrightarrow{a.s.}0$ when $k=1$.
\\
Likewise, by Lemma \ref{le2} (a), one finds for any $l\ge1/2$,
\begin{align*}
\re|v_n^{-6}d_j^2|^{2l}\le\frac{C_l}{v_n^{12l}n^{2l}}n^{(1-\eta)2l}\left\|\bd_{nj}^{-1}\left(\underline m_{nj}(z)\bS_p+\bi_p\right)^{-1}\right\|^{2l}
\le\frac{C_l}{v_n^{16l}n^{2\eta l}}.
\end{align*}
We have then, for any $\varepsilon>0$ and $l\ge1/2$,
\begin{align*}
&{\rm P}\left(\max_{j\le n,u\in S_n}\frac{|d_j^2|}{v_n^6}>\varepsilon\right)\le C_l\varepsilon^{-2l}\frac{n^2}{v_n^{16l}n^{2\eta l}}\le C_l\varepsilon^{-2l}n^{2-2\eta l/49}
\end{align*}
which implies that $(\ref{al9})\xrightarrow{a.s.}0$ for $k=2$ by taking $l>\frac{147}{2\eta}$. Using Lemma \ref{le2} (a) and (\ref{al4}), we find
\begin{align*}
|v_n^{-6}d_j^3|=&\frac{1}{v_n^{6}n}\left|\rtr\left[\left(\underline m_{nj}(z)\bS_p+\bi_p\right)^{-1}\bS_p\left(\bd_{nj}^{-1}-\bd_{n}^{-1}\right)\right]\right|\\
=&\frac{1}{v_n^{6}n}\left|\rho_j\br^*_j\bd_{nj}^{-1}\left(\underline m_{nj}(z)\bS_p+\bi_p\right)^{-1}\bS_p\bd_{nj}^{-1}\br_j\right|\\
\le&\frac{4C}{v_n^{7}n}\left|\rho_j\br^*_j\bd_{nj}^{-1}\bS_p\bd_{nj}^{-1}\br_j\right|\le\frac{4C}{v_n^{8}n},
\end{align*}
so that $(\ref{al9})\xrightarrow{a.s.}0$ for $k=3$.
By Lemma \ref{le2} (a) and (\ref{al8}), we get
\begin{align*}
|v_n^{-6}d_j^4|=&\frac{1}{v_n^{6}n}|\underline m_{nj}(z)-\underline m_{j}(z)|\left|\rtr\left[\left(\underline m_{nj}(z)\bS_p+\bi_p\right)^{-1}\bS_p\left(\underline m_{n}(z)\bS_p+\bi_p\right)^{-1}\bS_p\bd_{n}^{-1}\right]\right|\\
\le&\frac{C}{v_n^{6}n}\times\frac1{nv_n}\times\frac{p}{v_n^3}=\frac{C}{n v_n^{10}}
\end{align*}
so that $(\ref{al9})\xrightarrow{a.s.}0$ for $k=4$.
\\
Thus, we deduce, for any $l\ge1/2$ and all $\varepsilon>0$,
\begin{align*}
&{\rm P}\left(\max_{u\in S_n}|w_n|v_n^{-5}>\varepsilon\right)\le C_l\varepsilon^{-2l}n^{2-2\eta l/49}.
\end{align*}
Therefore, $\max_{u\in S_n}|w_n|v_n^{-5}\xrightarrow{a.s.}0$ by taking $l>\frac{147}{2\eta}$.
}
\end{supplement}


\end{document}